\newtheorem{thm}{Theorem}[section]
\newtheorem{THM}{Theorem}
\newtheorem{cor}[thm]{Corollary}
\newtheorem{prop}[thm]{Proposition}
\newtheorem{lemma}[thm]{Lemma}
\newtheorem{remark}[thm]{Remark}
\newcommand{\vecs}{\mathfrak{X}}
\newcommand{\dbyd}[2]{{\frac{\partial #1}{\partial #2}}}
\newcommand{\lieg}{\mathfrak{g}}
\newcommand{\rar}{\rightarrow}
\newcommand{\lm}[1]{\mathbb{#1}}
\DeclareMathOperator{\rank}{rank}
\DeclareMathOperator{\Pic}{Pic}
\DeclareMathOperator{\sing}{sing}
\DeclareMathOperator{\D}{D}
\DeclareMathOperator{\Aff}{Aff}
\DeclareMathOperator{\Image}{Im}
\title[Diagonal Poisson structures]{A characterization of diagonal Poisson structures}
\author[R. Lima and J.V. Pereira]{Renan Lima and  Jorge Vit\'{o}rio Pereira}
\sloppy \setcounter{tocdepth}{1}
\begin{document}

\maketitle

\begin{abstract}
The degeneracy locus  of a generically symplectic Poisson structure on a Fano manifold is always a singular hypersurface.
We prove that there exists just one family of generically symplectic Poisson structures in Fano manifold
with cyclic Picard group having a reduced simple normal crossing degeneracy locus.
\end{abstract}

\tableofcontents
\section{Introduction}

\subsection{Poisson structures}
Let $X$ be a complex manifold {of dimension $n$}. A Poisson structure on $X$ is a bivector field $\Pi \in H^0(X,\bigwedge^ 2 TX)$ such that the Schouten bracket $[\Pi , \Pi] \in H^0(X, \bigwedge^3 TX)$ vanishes identically. The vanishing of the Schouten bracket implies that the image of the morphism
\begin{align*}
\Omega^1_X &\longrightarrow TX \\
\eta &\longmapsto   i_\eta \Pi
\end{align*}
is an involutive subsheaf of $TX$, and the induced foliation is called the symplectic foliation of $\Pi$. The most basic invariant attached to $\Pi$ is its rank  which
is the generic rank of this involutive subsheaf of  $TX$.
Thanks to the  anti-symmetry  of $\Pi$, its rank is an even integer $2r$ where $r$ is the largest integer such that $\Pi^r$ does not vanish identically.
When $n$ is even and the rank is equal to $n$, we say that the Poisson structure is generically symplectic.  In this case, $\{ \Pi^{  {n/2}}=0 \}$ defines
a divisor which we call the degeneracy divisor of $\Pi$. 

In this paper we are mainly interested on generically symplectic Poisson structures which have reduced 
and simple normal crossing degeneracy divisor. In the terminology of Goto \cite{Goto}, these are log-symplectic 
manifolds (complex manifolds endowed with a closed and non-degenerate  logarithmic $2$-form). We avoid Goto's  terminology
in this paper because in the context of  $C^{\infty}$ Poisson geometry, a log-symplectic manifold is usually defined as a  generically symplectic
Poisson manifold with reduced and smooth degeneracy divisor, see \cite{Cavalcanti,GualtieriLi}. Log symplectic  manifolds in this strict sense are also 
 labeled topologically stable Poisson manifolds \cite{Radko},  b-Poisson manifolds \cite{Guillemin} and b-log-symplectic manifolds \cite{Marcut}.

\subsection{Diagonal Poisson structures}
The simplest examples of Poisson structures are the Poisson structures on $\mathbb C^n$ defined by constant bivector fields
\[
\Pi= \sum_{i < j} c_{ij} \frac{\partial }{\partial x_i} \wedge \frac{\partial }{\partial x_j} \, ,
\]
where $c_{ij}$ are complex constants. Although rather particular Poisson structures, a Theorem of Darboux tells us that
any Poisson structure at the neighborhood of a point where its rank is maximal is locally analytically equivalent to
a constant Poisson structure.

The constant Poisson structures are invariant by the action of $\mathbb C^n$ on itself by translations, and therefore give rise to Poisson structures
on quotients of $\mathbb C^n$ by discrete subgroups of itself. In particular, they define Poisson structures on $(\mathbb C^*)^n$. These are defined by bivectors of the form
\[
\Pi= \sum_{i< j} c_{ij} x_i x_j \frac{\partial }{\partial x_i} \wedge \frac{\partial }{\partial x_j} \, .
\]
These bivector fields extend to bivector fields on $\mathbb P^n$, and  we will call any Poisson structure on $\mathbb P^n$ projectively equivalent to
the resulting Poisson structure,
a {\bf diagonal Poisson structure}. For generic choices of constants $c_{ij}$, the rank of the Poisson  {structure }
is $n$, when $n$ is even, or $n-1$, when $n$ is odd. In the even case, the degeneracy divisor is a
simple normal crossing divisor supported on the $n+1$ coordinate hyperplanes. Our first main result tells that the
diagonal Poisson structures are the only Poisson structures on even-dimensional Fano manifolds of dimension at least 4
with cyclic Picard group having these properties.

\begin{THM}\label{TI:A}
Let $X$ be an even-dimensional  Fano manifold of dimension at least 4 and  with cyclic Picard group.
Suppose that $\Pi$ is a generically symplectic  Poisson structure on $X$ such that
its degeneracy divisor  is a reduced  normal crossing divisor. Then $X$ is the projective
space $\lm{P}^{2n}$ and $\Pi$ is a diagonal Poisson structure.
\end{THM}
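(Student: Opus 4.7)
The natural object is the meromorphic closed $2$-form $\omega:=\Pi^{-1}$, defined initially on the symplectic locus $X\setminus D$. Since the degeneracy divisor $D=\sum_{i=1}^{k}D_i$ is reduced and simple normal crossing, one checks from the local structure of $\Pi$ along $D$ that $\omega$ extends to a global section of $\Omega^2_X(\log D)$ and that $\omega^n$ is a nowhere-vanishing section of $K_X\otimes\mathcal{O}_X(D)$. Hence $-K_X\sim D$. Writing $\operatorname{Pic}(X)=\mathbb{Z}\cdot L$, $-K_X=rL$ (with $r$ the Fano index), and $D_i=a_iL$ with $a_i\ge 1$, we obtain the basic numerical identity $\sum a_i=r$. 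Each component $D_i$ is ample of dimension $2n-1\ge 3$, so by Grothendieck--Lefschetz $\operatorname{Pic}(D_i)=\mathbb{Z}\cdot L|_{D_i}$ is cyclic, and by adjunction $-K_{D_i}=(r-a_i)L|_{D_i}$, so every proper $D_i$ is itself Fano with cyclic Picard group.

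The next step exploits Poincar\'e residues. The $1$-form $\alpha_i:=\operatorname{Res}_{D_i}\omega$ is a closed logarithmic $1$-form on $D_i$ with poles on $E_i=\sum_{j\ne i}D_i\cap D_j$, and iterating yields an antisymmetric family of complex constants $\lambda_{ij}:=\operatorname{Res}_{D_i\cap D_j}\alpha_i$. Because $D_i$ is Fano, $H^0(D_i,\Omega^1_{D_i})=0$, so the residue map $H^0(D_i,\Omega^1_{D_i}(\log E_i))\to\bigoplus_{j\ne i}\mathbb{C}$ is injective, with image cut out by $\sum_{j\ne i}\lambda_{ij}[D_i\cap D_j]=0$ in $H^{1,1}(D_i)$; under cyclic Picard this collapses to the scalar identity $\sum_{j\ne i}\lambda_{ij}a_j=0$. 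A direct local computation further shows that $\alpha_i=0$ cuts out the codimension-one symplectic foliation of $\Pi|_{D_i}$, so this foliation is defined by a closed logarithmic $1$-form on a Fano manifold, which is a very rigid structure. The crucial lower bound on the Fano index then comes by combining (i) the local Darboux--Weinstein normal form for log-symplectic structures, which at a point lying on exactly $k_p\le 2n$ components of $D$ yields a constant-coefficient model with Pfaffian non-degeneracy on the residue matrix $(\lambda_{ij})$, with (ii) the global linear relations $\sum_j\lambda_{ij}a_j=0$. Juxtaposing these at the deepest strata (and exploiting $\dim X\ge 4$) should force the total number of components to satisfy $k\ge 2n+1$, hence $r=\sum a_i\ge k\ge 2n+1$.

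Once $r\ge 2n+1$ is established, Kobayashi--Ochiai forces $X=\mathbb{P}^{2n}$ with $r=2n+1$, so each $a_i=1$ and $D$ is a union of $2n+1$ hyperplanes; the SNC hypothesis then puts them in general position, i.e., in a coordinate configuration. A direct computation of $H^0(\mathbb{P}^{2n},\Omega^2_{\mathbb{P}^{2n}}(\log D))$ identifies it with the space of diagonal log-symplectic forms in adapted coordinates, and inverting produces diagonal Poisson structures. The main technical obstacle in this plan is the lower bound $k\ge 2n+1$: one must rule out Fano manifolds of index $\le 2n$ (quadrics, cubic hypersurfaces, del Pezzo manifolds, and so on) that could a priori carry a log-symplectic structure with fewer components, and this is precisely where the interplay between the Pfaffian non-degeneracy at deep strata, the global residue relations, and the Hodge theory of $\Omega^{\bullet}_X(\log D)$ must be exploited most delicately.
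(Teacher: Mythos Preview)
Your framework is sound and shares several ingredients with the paper: the identification $-K_X\sim D$, the closed logarithmic $1$-forms $\alpha_i$ on the components $D_i$ defining the symplectic foliation there (this is the paper's Proposition~\ref{P:log}), and the residue relations forcing at least three components (the paper's Proposition~\ref{P:k>=3}). But the heart of the argument---the bound $k\ge 2n+1$---is not a proof, only a hope. You write that combining local Pfaffian non-degeneracy at deep strata with the global relations ``should force'' $k\ge 2n+1$, and then concede this is ``the main technical obstacle.'' No mechanism is given for why, say, a hyperquadric $Q^{2n}$ (index $2n$, so at most $2n$ components) cannot carry such a structure; the Pfaffian condition at a codimension-$m$ stratum constrains only the $m\times m$ submatrix of $(\lambda_{ij})$ indexed by the components through that point, and there is no evident tension between these local constraints and the $k$ global relations $\sum_j\lambda_{ij}a_j=0$ when $k\le 2n$. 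Your outline stops exactly where the real work begins.

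The paper closes this gap by a completely different route: an \emph{induction on dimension} reducing everything to $\dim X=4$. Given components $Y,Z$ of $\D(\Pi)$ with $\Pi_{|Y\cap Z}$ generically symplectic (Proposition~\ref{Prop:Singularset}), the intersection $W=Y\cap Z$ is again Fano with cyclic Picard group and reduced SNC degeneracy divisor; by induction $W=\mathbb{P}^{2n-2}$, and adjunction plus Kobayashi--Ochiai force $X=\mathbb{P}^{2n}$ (Proposition~\ref{P:inducao}). The base case $\dim X=4$ is handled by ad hoc methods far from your residue/Pfaffian picture: index~$3$ is excluded by producing a global vector field via Polishchuk connections, invoking Wahl's theorem, and then the classification of Fano $3$-folds of index~$2$ carrying a foliation with trivial canonical bundle and nonzero tangent vector field (this pins $Y$ down to the quintic del~Pezzo $X_5$ with a specific $\Aff(\mathbb{C})$-foliation, shown not to be logarithmic); the remaining cases $X=Q^4$ and $X=\mathbb{P}^4$ with a quadric or cubic component in $\D(\Pi)$ are excluded by locating isolated singularities of the logarithmic foliation $\mathcal{F}_Y$ away from $\D(\Pi)$, using the classification of $4$-dimensional complex Lie algebras to control the linear part of $\Pi$ at such points (Lemma~\ref{Lem:Technicallemma}). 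None of this machinery appears in your sketch, and something of comparable specificity would be needed to carry your strategy through.
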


The hypothesis on the dimension of $X$ is indeed necessary, as a  Poisson structure on a smooth projective surface $S$
is nothing but a section of the anti-canonical bundle of $S$. Although $\mathbb P^2$ is the only Fano surface with cyclic Picard group,
on it any degree $3$ divisor appears as the degeneracy divisor of some Poisson structure.

One of the  key ideas in the proof of Theorem \ref{TI:A} is to show that the symplectic foliations
on the irreducible components of the degeneracy locus are defined by logarithmic $1$-forms and
then study isolated singularities of these. Under less restrictive assumptions we are
able to show that the symplectic foliation on a reduced and irreducible  {component} of the degeneracy divisor
of generically symplectic Poisson structure is defined by closed rational $1$-form, see Theorem \ref{T:closed}.

\subsection{Spaces of Poisson structures}
In dimension 2, the integrability condition for Poisson structures, $[\Pi,\Pi]=0$, is vacuous. Starting from dimension three
they impose strong constraints of $\Pi$. The study of the space of Poisson structures
on a given projective manifold $X$,
\[
\mathrm{Poisson}(X)  = \left\{ \Pi \in \mathbb P H^0(X, \bigwedge^2 TX) \, \Big| \,  [\Pi,\Pi]=0 \right\} \, ,
\]
is a challenging problem.

In dimension 3, we already know something about these spaces when $X$ is Fano with cyclic Picard group. A
Poisson structure $\Pi$ on a smooth projective $3$-fold $X$, if not zero, has rank $2$ and defines a codimension 1 foliation $
\mathcal F$ on $X$. The anti-canonical bundle
of $\mathcal F$ is effective with section vanishing on the divisorial components of the zero set of $\Pi$.
Therefore the study of Poisson structures on $3$-folds is equivalent to  the study of codimension 1 foliations with effective anti-canonical bundle.
In the case of $X=\mathbb P^3$ the description of the irreducible components of $\mathrm{Poisson}(X)$  has been carried out in \cite{CerveauLinsNetoAnnals}.
An analog description, when  $X$  is any other Fano $3$-fold with cyclic Picard group, is presented in  \cite{croco3b}.

In this paper  we prove that the diagonal Poisson structures form irreducible components
of the space of Poisson structures on $\mathbb P^n$ for any $n \ge 3$.

\begin{THM}\label{TI:B}
If $n\ge 3$ the Zariski closure  of the set of diagonal Poisson structures
in  $P H^0(\mathbb P^n, \bigwedge^2 T\mathbb P^n)$ is an
irreducible component of $\mathrm{Poisson}(\mathbb P^n)$.
\end{THM}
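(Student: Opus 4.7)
The plan is to show that at a generic $\Pi_0 \in \mathcal D$, where $\mathcal D$ denotes the set of diagonal Poisson structures in $\mathbb P H^0(\mathbb P^n, \bigwedge^2 T\mathbb P^n)$, the Zariski tangent space $T_{\Pi_0} \mathrm{Poisson}(\mathbb P^n)$ has dimension at most $\dim \mathcal D$. Combined with the inclusion $\mathcal D \subseteq \mathrm{Poisson}(\mathbb P^n)$ and the irreducibility of $\mathcal D$ (it is the $\mathrm{PGL}(n+1)$-orbit of the linear subspace $V$ spanned by the bivectors $x_i x_j\, \partial_{x_i} \wedge \partial_{x_j}$), this bound forces $\overline{\mathcal D}$ to be an irreducible component of $\mathrm{Poisson}(\mathbb P^n)$, since any irreducible component of $\mathrm{Poisson}(\mathbb P^n)$ containing $\Pi_0$ has dimension at most $\dim T_{\Pi_0} \mathrm{Poisson}(\mathbb P^n)$.

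For the even case $n = 2m$, Theorem \ref{TI:A} yields the stronger statement that some Zariski-open neighborhood of $\Pi_0$ in $\mathrm{Poisson}(\mathbb P^n)$ is contained in $\mathcal D$. Indeed, a generic $\Pi_0 \in \mathcal D$ is generically symplectic with reduced simple normal crossing degeneracy divisor (namely the union of the $n+1$ coordinate hyperplanes); both properties are Zariski-open in $\mathbb P H^0(\mathbb P^n, \bigwedge^2 T\mathbb P^n)$. So some Zariski-open neighborhood $U$ of $\Pi_0$ in $\mathrm{Poisson}(\mathbb P^n)$ consists only of Poisson structures sharing those properties, and Theorem \ref{TI:A} identifies each element of $U$ as diagonal. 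Locally near $\Pi_0$, the subvariety $\overline{\mathcal D}$ therefore agrees with $\mathrm{Poisson}(\mathbb P^n)$, hence is an irreducible component.

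For the odd case $n = 2m+1$, Theorem \ref{TI:A} does not apply, because the generic rank of any bivector on $\mathbb P^n$ is at most $n-1$. The strategy is to pass through the symplectic foliation: for a generic diagonal $\Pi_0$, the codimension-one symplectic foliation $\mathcal F_0$ is defined by the closed logarithmic $1$-form $\sum_i \lambda_i\, dx_i/x_i$ with $\sum \lambda_i = 0$. By the results of Cerveau--Lins Neto (for $n = 3$) and their higher-dimensional extensions, such logarithmic foliations form an irreducible component of the space of codimension-one foliations on $\mathbb P^n$. One then shows that any first-order Poisson deformation of $\Pi_0$ induces a first-order deformation of $\mathcal F_0$ inside that component, and bounds the fiber of the forgetful map $\Pi \mapsto \mathcal F_\Pi$ at $\Pi_0$ to obtain the tangent-space bound on $\mathrm{Poisson}(\mathbb P^n)$.

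The principal obstacle is the odd case, and specifically the lifting step for $n \geq 5$: when $n = 3$, codimension-one foliations coincide with rank-$2$ Poisson structures and the forgetful map is essentially an isomorphism, but in higher odd dimensions a foliation records strictly less data than a Poisson structure, and one must carefully track a varying symplectic form along the leaves when reconstructing $\Pi$ from $\mathcal F_\Pi$. The even case, by contrast, reduces almost tautologically to Theorem \ref{TI:A}.
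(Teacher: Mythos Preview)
Your odd-dimensional outline matches the paper's approach: reduce to the stability of the codimension-one logarithmic foliation (Calvo-Andrade) and then argue that every Poisson structure with that symplectic foliation is diagonal. The paper handles your ``lifting step'' by observing that $T\mathcal F_\omega$ is globally trivial, generated by $2k$ commuting diagonal vector fields, so any rank-$2k$ element of $\bigwedge^2 H^0(\mathbb P^{2k+1},T\mathcal F_\omega)$ is already a diagonal Poisson structure; this identifies the fibre of $\Pi\mapsto\mathcal F_\Pi$ explicitly.

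Your even-dimensional argument, however, has a genuine gap. The assertion that ``reduced simple normal crossing degeneracy divisor'' is Zariski-open in $\mathbb P H^0(\mathbb P^{2m},\bigwedge^2 T\mathbb P^{2m})$ is false. The hypersurface $\{x_0\cdots x_{2m}=0\}$ lies in the discriminant locus of degree-$(2m+1)$ hypersurfaces, and a generic curve through it inside that discriminant meets irreducible singular (hence non-SNC) hypersurfaces arbitrarily close by; the map $\Pi\mapsto\Pi^m$ does not avoid this. So you cannot simply invoke Theorem~\ref{TI:A} on an open neighbourhood: showing that nearby \emph{Poisson} structures still have SNC degeneracy is exactly the substance of the stability statement, not a free openness fact.

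The paper does \emph{not} use Theorem~\ref{TI:A} here at all. Instead it tracks the $2m+1$ degenerate singular points of $\Pi_0$ (the simultaneous zeros of $\Pi$ and of its curl $D_\Omega\Pi$) and shows, via a normal-form argument for $D_\Omega\Pi_t$, that each persists under deformation and that $\Pi_t^{\,m}$ still vanishes to order $2m$ there (Lemma~\ref{L:sss}). An elementary lemma (Lemma~\ref{L:hyper}) then forces any reduced degree-$(2m+1)$ hypersurface with $2m+1$ points of multiplicity $2m$ in general position to be a union of $2m+1$ hyperplanes; finally Proposition~\ref{P:concluir} (a direct computation, much weaker than Theorem~\ref{TI:A}) gives that $\Pi_t$ is diagonal. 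The curl-operator step is the missing idea in your proposal.
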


The proof of this result    relies on some  observations concerning the stability
of the loci where the Poisson structures has degenerate singularities in the even-dimensional case (see Subsection \ref{S:curl}); and on the stability
of codimension 1 logarithmic
foliations \cite{Omegar} in the odd-dimensional case.

\section{Poisson structures}

In this section we present the basic theory of Poisson structures on projective manifolds following
\cite{Polishchuk},  \cite{GualtieriPym} and \cite{DufourZung}.

\subsection{Basic definitions}
Let $X$ be a complex manifold.  A Poisson structure on $X$  is a $2$-derivation $\Pi\in H^0(X,\wedge^2 TX)$
such that the Schouten bracket $[\Pi,\Pi]$ vanishes identically.
If we set $\{ f , g\} := \Pi( df \wedge dg )$ then the vanishing of the Schouten bracket is equivalent to
the Jacobi identity for the Poisson bracket $\{ \cdot, \cdot \}: \wedge^2 \mathcal O_X \to \mathcal O_X$.

If $\Pi^k(p)\neq 0$ and $\Pi^{k+1}(p)=0$, then we say that $\Pi$ has rank $2k$ at $p$ and write  $\rank_p \Pi=2k$.
The biggest $2k$ such that $\Pi^k\neq0$ is the rank of the Poisson structure.

We denote by $\Pi^\sharp: \Omega^1_X\rar TX$ the $\mathcal{O}_X$-linear anchor map
defined by contraction of $1$-forms with $\Pi$.

A germ of vector field $v\in (TX)_p$ is Hamiltonian with respect to $\Pi$ if $v=\Pi^\sharp(d f)$ for some $f\in\mathcal{O}_{X,p}$. If $\Pi$
can be understood from the context, then we just say that $v$ is a Hamiltonian vector field.
A germ  $v \in (TX)_p$ is a Poisson vector field with respect to $\Pi$, if $[v,\Pi] =0$.
Note that every Hamiltonian vector field
is Poisson, but the converse does not hold. Poisson vector fields are infinitesimal symmetries of the Poisson structure and do not
need to belong to the image of $\Pi^{\sharp}$.

All the above definitions can be made on the
more general case of complex varieties, or  even schemes/analytic spaces, cf. \cite{Polishchuk} and \cite{GualtieriPym}.
For instance if $X$ is variety (reduced but perhaps singular), then we denote by $\vecs^q_X$ the sheaf of holomorphic $q$-derivations of $X$, i.e. the sheaf $\vecs^q_X = \mathrm{Hom}(\Omega^q_X,\mathcal{O}_X)$. A Poisson structure on $X$  is then a  $2$-derivation $\Pi\in H^0(X,\vecs^2_X)$  such that the Schouten bracket $[\Pi,\Pi]$ vanishes identically. The sheaf $\vecs^q_X$ coincides with $\wedge^ q TX$ over the smooth locus of $X$, but in general the natural inclusion $\wedge^q TX \to \vecs^q_X$ is strict.

\subsection{Poisson subvarieties and degeneracy loci}
Let $Y\subseteq X$ be a subvariety with defining ideal sheaf $\mathcal I_Y$. We say that $Y$ is a Poisson subvariety if $v(\mathcal{I}_Y)\subseteq\mathcal{I}_Y$ for every  Hamiltonian vector field $v$.
Equivalently, the global section $\Pi_{|Y}\in H^0(Y,(\wedge^2 TX)_{|Y})$ lies in the image of the natural map $H^0(Y, \vecs^2_Y) \to  H^0(Y,(\wedge ^2 TX)_{|Y})$.

More generally, we say that $\mathcal{I}\subseteq \mathcal{O}_X$ is a Poisson ideal if $v(\mathcal{I})\subseteq\mathcal{I}$ for every  Hamiltonian
vector field $v$. We recall that the intersection and the sum of two Poisson ideals are Poisson ideals, and that the radical of a Poisson ideal is a Poisson ideal, see \cite[Lemma 1.1]{Polishchuk}.

The $2k^{th}$ degeneracy ideal $\mathcal{I}_{2k}$ is the image of the morphism
$$\Omega^{2k+2}_X\overset{\Pi^{k+1}}\longrightarrow\mathcal{O}_X.$$
Jacobi's identity implies that $\mathcal{I}_{2k}$ is a Poisson ideal, and the  subvariety (or rather subscheme) $\D_{2k}(\Pi)$ defined by  $\mathcal I_{2k}$
is a Poisson subvariety, called the $2k^{th}$ degeneracy locus.
Note that the support of $\D_{2k}(\Pi)$ satisfies
\[
 |\D_{2k}(\Pi)|=\{p\in X; \rank_p\,\Pi\leq2k\} .
\]

If $\Pi$ is a Poisson structure of rank $2k$ then we define  the degeneracy divisor of $\Pi$, denoted by $\D(\Pi)$,
 as the divisorial component of the $(2k-2)^{\rm th}$ degeneracy locus of $\Pi$.
Note that if $\Pi$ is a generically symplectic Poisson structure
then $\D_{2k-2}(\Pi)= \D(\Pi)$, but for a general Poisson structure of rank $2k$
the degeneracy divisor $\D(\Pi)$ does not need to coincide with  $\D_{2k-2}(\Pi)$, all we have is the inclusion $\D(\Pi) \subset \D_{2k-2}(\Pi)$.

\subsection{Symplectic foliation}
Suppose that $\Pi$ is a Poisson structure of rank $2k$ on $X$. As already mentioned in the Section 1, the image of the anchor map is an involutive subsheaf of $TX$. On $U = X-\D_{2k-2}(\Pi)$, the complement
of the degeneracy locus of $\Pi$, this image is locally free and has locally free cokernel. Thus it defines a smooth foliation $\mathcal F_{|U}$. The Poisson structure induces a symplectic structure on the leaves of $\mathcal F_{|U}$. This foliation extends to a singular foliation  $\mathcal F$ on $X$
with tangent sheaf $T\mathcal F$ equal to the saturation of $\Image \Pi^\sharp$ in $TX$. The singular set of $\mathcal F$ consists of the points over which the cokernel $TX/T\mathcal F$ is not locally free and therefore has codimension at least 2.
Note that the singular set of $\mathcal F$ can be strictly smaller then the singular
set of $\Pi$.

If there are no divisorial components on $\D_{2k-2}(\Pi)$, i.e. $\D(\Pi)=0$,  then
\[
\det T\mathcal F := (\bigwedge^{rk \Pi} T\mathcal F)^{**} \simeq \mathcal O_X .
\]
In other words, the anti-canonical bundle of $\mathcal F$ is trivial. Otherwise, when there
are divisorial components in $\D_{2k-2}(\Pi)$ we get that the anti-canonical bundle of $\mathcal F$
is effective, i.e. $\det T\mathcal F \simeq \mathcal O_X(\D(\Pi))$.

The conormal sheaf of $\mathcal F$ is, by definition, the kernel of the dual of the
inclusion $T\mathcal F \to TX$. Therefore, it fits in the exact sequence
\[
0 \to N^* \mathcal F \longrightarrow \Omega^1_X \longrightarrow T\mathcal F ^* \, .
\]
The rightmost map is surjective away from the singular set of $\mathcal F$ and, since $\sing(\mathcal{F})$ has codimension at least two, we have
\[
\det N^* \mathcal F = K_X \otimes \det T\mathcal F .
\]
If we set $N= \det (N^* \mathcal F)^*$ then it follows that $\mathcal F$ is defined
by a section $\omega$ of $\Omega^q_X \otimes N$ where $q = \dim X - \dim \mathcal F$. To
wit,  $T\mathcal F$ is the kernel of the morphism
\[
TX \longrightarrow \Omega_X^{q-1} \otimes N
\]
defined by contraction with $\omega$.

If $\Pi$ defines a codimension 1 foliation on $X$, $\dim X=2n+1$, then $N^*\mathcal{F}= K_X\otimes\mathcal{O}_X(\D(\Pi))$.


\subsection{Poisson connections}
If $X$ is a manifold endowed with a Poisson structure $\Pi$ then a Poisson connection on a line-bundle $\mathcal L$ over $X$ is a
morphism of $\mathbb C$-sheaves
\[
\nabla : \mathcal L \longrightarrow  TX \otimes \mathcal L
\]
satisfying
\[
  \nabla ( f \sigma ) = f \nabla ( \sigma ) + \Pi^\sharp(df) \otimes \sigma \, ,
\]
  where $\sigma$ is any  germ  of section of $\mathcal L$ and
$f$ is any germ of function.

If $H$ is a Poisson hypersurface then the associated line-bundle  $\mathcal{L}=\mathcal{O}_X(H)$
carries a natural Poisson connection defined as follows. If $H$ is defined by $\{f_i=0\}$ with $f_i \in \mathcal O_X(U_i)$  then 
there are  trivializing sections $s_i \in H^0(U_i, \mathcal L)$  satisfying $s_i = (f_i f_j^{-1}) s_j$ over 
non-empty intersections $U_i\cap U_j$. 
The natural Poisson connection on $\mathcal L$ is defined locally by the formula
\[
\nabla s_i=-\Pi^\sharp(\frac{d f_i}{f_i})\otimes s_i \, .
\]
This is the  Polishchuk connection associated to $H$, see \cite[Section 7]{Polishchuk}.

 \subsection{Poisson structures with simple normal crossing degeneracy divisor}
We close this section on the basic theory of holomorphic Poisson manifolds by recalling a
result  by Polishchuk  \cite[Corollary 10.7]{Polishchuk} which is the starting point of our proof of Theorem \ref{TI:A}.

 \begin{thm}\label{Thm:Polishchuk}
   Let $(X,\Pi)$ be a generically symplectic  Poisson manifold with $\dim X=2n$ and $X$ be projective such that the ideal $\mathcal{I}_{2n-2}$ is reduced and the variety
   $\D(\Pi)=V_{2n-2}$ is composed by smooth irreducible components in normal crossing position. If $H^{(m)}$ consists of the points of $X$ where exactly $m$
   irreducible components of $\D_{2n-2}(\Pi)$ meet, then
   $\Pi$ has constant rank at each connected component of $H^{(m)}$. Furthermore we have $2n-2m\leq \rank\,\Pi_{|H^{(m)}}\leq2n-m$.
   \end{thm}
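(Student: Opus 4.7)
The plan is to work in local coordinates at a point $p\in H^{(m)}$ adapted to the SNC structure of $\D(\Pi)$, establish a normal form for $\Pi$, and read the rank bounds off this form.

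First I would show that each irreducible component of $\D(\Pi)$ is a Poisson subvariety: since $\mathcal{I}_{2n-2}$ is a reduced Poisson ideal, and both the radical and the minimal primes of a Poisson ideal are Poisson ideals (by the cited results of Polishchuk), each component of $\D(\Pi)$ cuts out a Poisson ideal. Choose holomorphic coordinates $(x_1,\ldots,x_{2n})$ near $p$ so that the $m$ components of $\D(\Pi)$ through $p$ are the hyperplanes $V_i=\{x_i=0\}$, $i=1,\ldots,m$. The Poisson subvariety condition $\{x_i,f\}\in(x_i)$ (for $i\leq m$ and all germs $f$) forces the coefficients $a_{ij}$ of $\Pi=\sum a_{ij}\,\partial_i\wedge\partial_j$ to satisfy $a_{ij}\in(x_i)$ whenever $i\leq m$ and, symmetrically, $a_{ij}\in(x_j)$ whenever $j\leq m$. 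Coprimeness of distinct $x_k$'s then gives the decomposition
\[
\Pi=\sum_{1\leq i<j\leq m}x_ix_j\,b_{ij}\,\partial_i\wedge\partial_j+\sum_{i\leq m<j}x_i\,c_{ij}\,\partial_i\wedge\partial_j+\Pi_T,
\]
where $\Pi_T=\sum_{m<i<j}a_{ij}\,\partial_i\wedge\partial_j$ involves only derivations tangent to $H^{(m)}$. The upper bound $\rank_p\Pi\leq 2n-m$ is then immediate: every coefficient of a term carrying some $\partial_i$ with $i\leq m$ vanishes at $p$, so $dx_1,\ldots,dx_m$ all lie in $\ker\Pi^\sharp|_p$.

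For the lower bound, I would use that reducedness of $\mathcal{I}_{2n-2}$ forces
\[
\Pi^n=x_1\cdots x_m\cdot u\cdot\partial_1\wedge\cdots\wedge\partial_{2n}
\]
with $u$ a local unit. Expanding $\Pi^n$ via the normal form, any term of multidegree exactly $(1,\ldots,1,0,\ldots,0)$ in $(x_1,\ldots,x_m)$ must pick up exactly one mixed factor $x_i\,c_{ij}\,\partial_i\wedge\partial_j$ for each $i\leq m$ together with $n-m$ factors from $\Pi_T$, since the pure-$\D$ block contributes extra $x_\ell$'s and $\Pi_T$ contributes none. Matching the coefficient of $x_1\cdots x_m$ to $u(p)$ yields, up to a combinatorial constant, an identity of the form $\partial_1\wedge\cdots\wedge\partial_m\wedge\xi_1\wedge\cdots\wedge\xi_m\wedge\Pi_T^{n-m}(p)=u(p)\,\partial_1\wedge\cdots\wedge\partial_{2n}$ with $\xi_i=\sum_{j>m}c_{ij}(p)\partial_j$. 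In particular $\Pi_T^{n-m}(p)\neq 0$, and since $\Pi|_{H^{(m)}}=\Pi_T|_{H^{(m)}}$ by construction, this gives $\rank_p\Pi\geq 2(n-m)$.

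The main obstacle is showing that $\rank\Pi$ is actually \emph{constant} on each connected component of $H^{(m)}$. For $m\leq 1$ the bounds together with parity pin down a unique value, but for $m\geq 2$ the interval $[2n-2m,2n-m]$ contains several even integers, and one must rule out rank drops inside a connected component. The natural approach is to view $\Pi_T|_{H^{(m)}}$ as a Poisson structure on the smooth $(2n-m)$-dimensional variety $H^{(m)}$ and to argue that any drop of its rank at a point $q\in H^{(m)}$ would force $q$ onto a further component of $\D(\Pi)$, i.e.\ into a deeper stratum $H^{(m')}$ with $m'>m$; combined with the reducedness of $\mathcal{I}_{2n-2}$ this would preclude jumps inside a connected component. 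Verifying this compatibility, essentially via an induction on $m$ using the Polishchuk connections on $\mathcal{O}_X(V_i)$, is the technical heart of the theorem.
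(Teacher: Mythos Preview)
The paper does not prove this theorem: it is quoted verbatim as Polishchuk's result \cite[Corollary~10.7]{Polishchuk} and no argument is given. So there is no ``paper's own proof'' to compare your proposal against; what you are writing is an attempted reconstruction of Polishchuk's argument, not of anything in this paper.

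On its own merits, your treatment of the two rank inequalities is essentially fine. The upper bound is correct as stated. For the lower bound, your claimed identity is not quite the full story: terms from the ``pure-$\D$'' block also contribute to the coefficient of $x_1\cdots x_m$ (e.g.\ one factor $x_ix_jb_{ij}\,\partial_i\wedge\partial_j$ together with $m-2$ mixed factors and $n-m+1$ factors from $\Pi_T$ still has multidegree $(1,\ldots,1)$). However, each such extra contribution involves $\Pi_T^{\,n-m+k}(p)$ with $k\ge 1$, and these all vanish whenever $\Pi_T^{\,n-m}(p)$ does; so the contrapositive still yields $\Pi_T^{\,n-m}(p)\neq 0$ and hence $\rank_p\Pi\ge 2(n-m)$. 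A cleaner way to see both bounds at once is to factor the skew matrix of $\Pi$ as $\mathrm{diag}(x_1,\ldots,x_m,1,\ldots,1)\,M''\,\mathrm{diag}(x_1,\ldots,x_m,1,\ldots,1)$ with $M''=\begin{pmatrix}B&C\\-C^{T}&T\end{pmatrix}$; reducedness forces $M''(p)$ to be invertible, whence $\rank T(p)\ge 2n-2m$, while $\Pi(p)=T(p)$ gives $\rank_p\Pi\le 2n-m$ trivially.

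The genuine gap is exactly the one you flag: constancy of the rank on each connected component of $H^{(m)}$. You do not prove it, and the sketch you give (``rank drops would push $q$ into a deeper stratum, by induction using the Polishchuk connections'') is not an argument yet. In particular you would need to show that the degeneracy loci of the induced Poisson structure $\Pi_T$ on $H^{(m)}$ are themselves traces of components of $\D(\Pi)$, which is not automatic from what you have written. Since this is precisely the content that makes the theorem nontrivial for $m\ge 2$, the proposal as it stands is incomplete; you should either supply the inductive step in detail or, as the paper does, cite Polishchuk directly.
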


 In particular, $\Pi$ induces a codimension 1  foliation on each component of $\D(\Pi)$ and the singular locus of the foliation is
 contained in the intersection of, at least, two $H_i$. Moreover, let $\Pi_1=\Pi_{|H_1}$ be the Poisson structure in $H_1$ induced by $\Pi$, then
 $\D(\Pi_1)\subseteq (H_2\cup\ldots\cup H_k)\cap H_1$. If $\D(\Pi_1)$ does not contain $H_i\cap H_1$ for some $i$, then $H_i\cap H_1$ is invariant by the
  foliation induced by $\Pi_1$.

 \section{The degeneracy divisor}
In this section we will reduce the proof of Theorem \ref{TI:A} to the four-dimensional case.

\subsection{Index of a Fano manifold}
Recall a manifold $X$ is said to be Fano if its anti-canonical  bundle $KX^*$ is ample.
Assume that $X$ is Fano and has cyclic Picard group, i.e.   $\Pic\,X=\mathbb Z$.
If $H$ is an ample generator of the Picard group of $X$
then the degree of a line-bundle  $\mathcal L$ is defined by the relation
\[
\mathcal L = \mathcal O_X(\deg(\mathcal L)H) \, .
\]
The {\bf index} of $X$, denoted by $i(X)$,
is the degree of the anti-canonical divisor, i.e. $i(X) = \deg(KX^*)$.
It was proved by Kobayashi and Ochiai  \cite{KobayashiOchiai} that the index of a Fano manifold of dimension $n$ is bounded by $n+1$. Moreover,
the extremal cases are $\mathbb P^n$ ($i(X)=n+1$) and hyperquadrics $Q^n \subset \mathbb P^{n+1}$ ($i(X) =n$).

\begin{lemma}\label{Lemma:Lefschetz}
 Let $X$ be a Fano manifold with $\Pic\, X=\mathbb Z$ and $\dim X\geq 4$. Let $Y$ be a smooth hypersurface such that $\deg Y<i(X)$. Then
 $Y$ is a Fano manifold with $\Pic Y=\mathbb Z$.
\end{lemma}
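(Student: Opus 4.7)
The plan is to combine the Grothendieck--Lefschetz theorem for Picard groups with the adjunction formula. Since $\Pic X = \mathbb{Z}\cdot H$ with $H$ ample, and $Y$ is a smooth hypersurface, we have $\mathcal{O}_X(Y)\simeq \mathcal{O}_X(dH)$ for some positive integer $d=\deg Y$. In particular $Y$ is an ample divisor on $X$.

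First I would establish the Picard group statement. Because $X$ is smooth projective of dimension $\dim X\geq 4$ and $Y$ is a smooth ample divisor, the Grothendieck--Lefschetz theorem on Picard groups (as in SGA 2) implies that the restriction map $\Pic X\to \Pic Y$ is an isomorphism. Hence $\Pic Y\simeq \mathbb{Z}$, generated by $H_{|Y}$.

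Next I would verify the Fano condition using adjunction. From $K_Y=(K_X+Y)_{|Y}$ and the identities $K_X^*=\mathcal{O}_X(i(X)\,H)$ and $\mathcal{O}_X(Y)=\mathcal{O}_X(dH)$, I get
\[
-K_Y=\bigl(-K_X-Y\bigr)_{|Y}=\bigl(i(X)-d\bigr)\,H_{|Y}.
\]
The hypothesis $d<i(X)$ makes the coefficient $i(X)-d$ strictly positive. Since $H_{|Y}$ is the restriction of an ample line bundle to a closed subscheme, it is ample on $Y$, and therefore $-K_Y$ is a positive multiple of an ample class, hence ample. This proves that $Y$ is Fano.

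The argument is essentially a direct application of two standard results, so I do not anticipate a serious obstacle; the only point where the hypotheses are used in an essential way is the Lefschetz step, which requires precisely $\dim X\geq 4$ (equivalently $\dim Y\geq 3$) to conclude that the restriction on Picard groups is an isomorphism rather than merely injective. The degree bound $d<i(X)$ is used exclusively to guarantee the positivity of $-K_Y$.
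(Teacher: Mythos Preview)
Your proof is correct and follows essentially the same approach as the paper: the paper also invokes the Lefschetz theorem for Picard groups (citing Lazarsfeld rather than SGA~2) to get $\Pic Y=\mathbb{Z}$, and then applies adjunction to conclude $-K_Y$ is ample. Your write-up is in fact slightly more careful with the adjunction computation than the paper's.
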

\begin{proof}
Lefschetz theorem for Picard groups \cite[Example 3.1.25]{Lazarsfeld}  implies  that the restriction morphism $\Pic X \to \Pic Y$ is an isomorphism.
In particular, $\Pic Y = \mathbb Z$.

Adjunction formula gives
\[
KY = K X_{|Y} \otimes \mathcal O_Y(- Y) = \mathcal O_Y( (i(X) - \deg(Y)) Y) \, ,
\]
and our assumptions imply that $-KY$ is ample.
\end{proof}


 \subsection{The symplectic foliation on  the degeneracy divisor}

Let  $H= \sum H_i$ be a simple normal crossing divisor on a manifold $X$. A meromorphic $1$-form $\omega$ on $X$
is logarithmic with poles on $H$  if for any germ of local equation $h$ of $H$, the differentials
form $h \omega$ and $h d \omega$ are holomorphic. The sheaf of logarithmic $1$-forms with poles
on $H$, $\Omega^1_X(\log H)$ is locally free and fits into the exact sequence
$$0\rar\Omega^1_X\longrightarrow\Omega^1_X(\log H)\longrightarrow\bigoplus_{i=1}^m\mathcal{O}_{H_i}\rar0,$$
where the arrow on the right is the residue map.

\begin{prop}\label{P:log}
Let $X$ be a projective manifold of dimension $2n$ and $\Pi$ be a generically symplectic Poisson structure on it. If $\D(\Pi)$ is a
simple normal crossing divisor then for every irreducible component $Y$ of $\D(\Pi)$ the symplectic foliation on $Y$
is defined by an element of $H^0(Y, \Omega^1_Y(\log E))$ with non-zero residues on every irreducible component of  $E = (\D(\Pi)- Y)_{|Y} - \D(\Pi_{|Y})$.
\end{prop}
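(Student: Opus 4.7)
The plan is to realize $\Pi$ as the inverse of a closed logarithmic symplectic form and to identify the defining $1$-form of the symplectic foliation $\mathcal F_1$ of $\Pi_{|Y}$ with the Poincar\'e residue of this $2$-form along $Y$. Setting $D=\D(\Pi)$, the SNC hypothesis ensures that the symplectic form dual to $\Pi$ on $X\setminus D$ extends to a closed non-degenerate logarithmic $2$-form
\[
\omega_{\Pi} \in H^0\bigl(X,\,\Omega^2_X(\log D)\bigr)
\]
with $\omega_\Pi^n$ trivializing $\Omega^{2n}_X(\log D)$; this is the log-symplectic viewpoint of Goto mentioned in the introduction, equivalent to the anchor $\Pi^{\sharp}$ factoring through an isomorphism $\Omega^1_X(\log D)\xrightarrow{\sim} TX$. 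I take as candidate defining form of $\mathcal F_1$ the Poincar\'e residue
\[
\alpha \;:=\; \mathrm{Res}_Y\,\omega_{\Pi} \;\in\; H^0\bigl(Y,\,\Omega^1_Y(\log F)\bigr),\qquad F \;:=\; (D-Y)_{|Y},
\]
which is closed since $\omega_{\Pi}$ is.

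The identification of $\alpha$ with a defining form of $\mathcal F_1$ and the analysis of its residues are checked locally, at a generic point of $Y$ and at a generic point of each intersection $H_i\cap Y$. Theorem~\ref{Thm:Polishchuk} fixes $\rank \Pi$ on each stratum, and a Darboux-type theorem for closed log-symplectic $2$-forms then produces explicit coordinates. At a smooth point of $Y$ the model is $\omega_\Pi=\tfrac{dx_1}{x_1}\wedge dx_2+\sum_{j\geq 2}dx_{2j-1}\wedge dx_{2j}$ with $Y=\{x_1=0\}$; inverting, $\Pi_{|Y}=\partial_3\wedge\partial_4+\cdots$, whose symplectic foliation is visibly cut out by $\alpha=dx_2$. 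At a generic point of $H_i\cap Y$, Theorem~\ref{Thm:Polishchuk} allows $\rank\Pi\in\{2n-2,\,2n-4\}$, with corresponding normal forms
\[
\omega_\Pi=\frac{dx_1}{x_1}\wedge\frac{dx_2}{x_2}+\sum_{j\geq 2}dx_{2j-1}\wedge dx_{2j}\quad\text{or}\quad \omega_\Pi=\frac{dx_1}{x_1}\wedge dx_2+\frac{dx_3}{x_3}\wedge dx_4+\sum_{j\geq 3}dx_{2j-1}\wedge dx_{2j},
\]
with $H_i$ being $\{x_2=0\}$ or $\{x_3=0\}$, respectively. In the first case inversion yields $\Pi_{|Y}=\partial_3\wedge\partial_4+\cdots$, which is non-degenerate transversally to $H_i\cap Y$, so $H_i\cap Y\not\subseteq \D(\Pi_{|Y})$, while $\alpha=\tfrac{dx_2}{x_2}$ has non-zero residue along $H_i\cap Y$. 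In the second, $\Pi_{|Y}=x_3\partial_3\wedge\partial_4+\cdots$ vanishes along $H_i\cap Y$, placing it in $\D(\Pi_{|Y})$, and $\alpha=dx_2$ is holomorphic across $H_i\cap Y$.

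Gluing these local assertions completes the proof: the components of $F$ across which $\alpha$ extends holomorphically are precisely those lying in $\D(\Pi_{|Y})$, so $\alpha$ lies in the subsheaf $\Omega^1_Y(\log E)\subseteq\Omega^1_Y(\log F)$ and its residue is non-zero on each component of $E=F-\D(\Pi_{|Y})$; and $\ker\alpha$ agrees with the tangent sheaf of $\mathcal F_1$ on the dense open subset of $Y$ where the local models apply, so $\alpha$ defines $\mathcal F_1$. The step most likely to require care is the justification of the Darboux-type normal form for $\omega_\Pi$ along the codimension-two stratum $H^{(2)}$; this is standard in log-symplectic geometry, but if one prefers to avoid quoting it, it can be recovered from the Frobenius theorem applied to the closed logarithmic $2$-form $\omega_\Pi$ together with the rank constraints of Theorem~\ref{Thm:Polishchuk}.
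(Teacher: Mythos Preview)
Your argument is correct and takes a genuinely different route from the paper's. The paper works entirely on $Y$: it observes that $(\Pi_{|Y})^{n-1}\in H^0(Y,\bigwedge^{2n-2}TY)\simeq H^0(Y,\Omega^1_Y\otimes KY^*)$ defines the symplectic foliation and vanishes exactly on $\D(\Pi_{|Y})$, then divides by the section $s\in H^0(Y,KY^*)$ cut out by $(\D(\Pi)-Y)\cap Y$; the quotient has simple poles on $E$, and it is logarithmic because each component of $E$ is a Poisson subvariety of $Y$ and hence invariant by the foliation. No ambient log-symplectic form, no local normal forms beyond Theorem~\ref{Thm:Polishchuk}.

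Your approach via the Poincar\'e residue of $\omega_\Pi$ is more conceptual and has the pleasant side effect that $\alpha$ is automatically \emph{closed}: in effect you are proving Proposition~\ref{P:log} and a piece of Theorem~\ref{T:closed} at once, and one checks that your $\alpha$ coincides, up to scalar, with the paper's $\omega/s$. The cost is the reliance on a Darboux normal form for $\omega_\Pi$ along the corner stratum $H^{(2)}$. Such a statement is available in the holomorphic log-symplectic setting, but your fallback (``recovered from the Frobenius theorem'') is too vague to serve as a proof. If you wish to avoid quoting a log-Darboux theorem at corners, a cleaner fix is to borrow the paper's idea for the residue computation: once you know $\alpha$ defines $\mathcal F_Y$ at generic points of $Y$ (which only requires the standard $b$-symplectic Darboux theorem at smooth points of $D$), the polar divisor of $\alpha$ is determined by comparing $\alpha$ with $(\Pi_{|Y})^{n-1}$ as twisted $1$-forms, and no local analysis along $H^{(2)}$ is needed.
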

\begin{proof}
 Observe that $(\Pi_{|Y})^{n-1}$ is a
section of $\wedge^{2n-2} TY \simeq \Omega^1_Y \otimes KY^*$, and therefore gives rise to a twisted $1$-form
$\omega \in H^0(Y,\Omega^1_Y \otimes KY^*)$. Note that $\omega$ vanishes on $\D(\Pi_{|Y})$
and that Theorem \ref{Thm:Polishchuk} implies that the support of $\D(\Pi_{|Y})$ is contained in the support of  $(\D(\Pi)- Y)\cap Y$.  Let $s \in H^0(Y, KY^*)$
be a section  vanishing on $( \D(\Pi) - Y)\cap Y$. The quotient $\frac{\omega}{s}$ is a rational $1$-form on $Y$
with simple poles on the irreducible components of $(\D(\Pi) -Y)\cap Y  - \D(\Pi_{|Y})$.
But since these irreducible components are Poisson subvarieties of $\Pi_{|Y}$, the symplectic foliation of  {$\Pi$} is tangent
to them. It follows that  $\frac{\omega}{s}$ is logarithmic with polar set equal to  $E$.
\end{proof}


\subsection{The symplectic foliation on  the degeneracy divisor revisited }
Proposition \ref{P:log} is a particular case of the more general result below.
Although not strictly necessary for the proof of Theorem \ref{TI:A} we believe that it has some independent interest.
Under the assumption that the degeneracy divisor is smooth and irreducible, the 
result below is essentially equivalent to \cite[Proposition 1.8]{GualtieriLi}. 
The interest of the proof below is that it does not make any extra assumption on the degeneracy divisor besides reducedness.

\begin{thm}\label{T:closed}
Let $X$ be a projective manifold of dimension $2n$ and $\Pi$ be a generically symplectic Poisson structure on it.
Let $Y$ be a reduced irreducible component of   $\D(\Pi)$  and let $Y^*= Y - \sing(Y)$ be its smooth locus. Then the symplectic
foliation $\mathcal F_Y$ on $Y$  has codimension 1 and its restriction to $Y^*$ is defined by a closed meromorphic
$1$-form $\omega$ on $Y^*$  without divisorial components in its zero set, and with polar divisor satisfying
\[
  (\omega)_{\infty} =  (\D(\Pi)- Y)\cap Y^* - \D(\Pi_{|Y^*}) \, .
\]
\end{thm}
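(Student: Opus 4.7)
The plan is to realize $\omega$ as the residue along $Y$ of the meromorphic symplectic $2$-form dual to $\Pi$, so that closedness of $\omega$ will follow immediately from closedness of that $2$-form.

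First I would construct a closed meromorphic $2$-form on $X$ out of $\Pi$. Because $\Pi$ is generically symplectic the anchor $\Pi^{\sharp}\colon \Omega^1_X\to TX$ is an isomorphism on $X\setminus \D(\Pi)$, and its inverse, viewed as a bilinear form, yields a holomorphic symplectic $2$-form $\Omega$ there. The Jacobi identity $[\Pi,\Pi]=0$ is equivalent to $d\Omega=0$, so $\Omega$ extends as a closed meromorphic $2$-form on $X$ with polar support contained in $\D(\Pi)$. Since $Y$ appears with multiplicity one in $\D(\Pi)$, the section $\Pi^{n}\in H^0(X,K_X^{*})$ has a simple zero along $Y$, and correspondingly $\Omega$ has a simple pole along $Y$ at its generic points. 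Following the strategy of Proposition~\ref{P:log}, I would then form
\[
\omega_0 := (\Pi_{|Y^*})^{n-1}\in H^0(Y^*,\Omega^1_{Y^*}\otimes K_{Y^*}^{*})
\]
and divide by a section $s\in H^0(Y^*,K_{Y^*}^{*})$ vanishing on $(\D(\Pi)-Y)\cap Y^*$, which exists by adjunction applied to the zero divisor of $\Pi^{n}$. The quotient $\omega=\omega_0/s$ is a meromorphic $1$-form on $Y^*$ defining the symplectic foliation $\mathcal F_Y$ away from $\D(\Pi_{|Y^*})$.

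The crucial step is the identification $\omega=\mathrm{Res}_Y\Omega$, up to a nonzero constant. At a generic smooth point of $Y$ not lying on any other component of $\D(\Pi)$, a local log-symplectic normal form puts $\Omega$ in the shape $\frac{dy}{y}\wedge dx+\sum_{i=2}^{n}dx_i\wedge dy_i$, so that $\Pi_{|Y}=\sum_{i=2}^{n}\partial_{x_i}\wedge \partial_{y_i}$ and $\mathrm{Res}_Y\Omega=dx$; direct comparison with $\omega_0/s$ on this dense open subset extends the identification to all of $Y^*$. Because the residue of a closed meromorphic $2$-form along a smooth component of its polar locus is a closed $1$-form, it will follow that $d\omega=0$.

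Finally I would read off the polar and zero divisors from $\omega=\mathrm{Res}_Y\Omega$: along any component $Z\neq Y$ of $\D(\Pi)$ meeting $Y^*$ the form $\Omega$ acquires an additional simple pole, and so $\omega$ has a simple pole along $Z\cap Y^*$ that is cancelled precisely at the points where $\Pi_{|Y^*}$ drops rank, yielding both the claimed polar divisor and the absence of divisorial zeros. I expect the main obstacle to be exactly this last step: carefully analyzing the residue at points of $Y^*$ where several components of $\D(\Pi)$ meet $Y$ in a possibly non normal crossing fashion, using only the reducedness of $\D(\Pi)$ along $Y$.
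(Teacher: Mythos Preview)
Your residue approach is valid and genuinely different from the paper's argument. The paper never introduces the meromorphic symplectic form $\Omega$; instead it works directly with the Taylor expansion $\Pi = y\,\partial_y\wedge v_0 + \sigma_0 + O(y)$ along $Y^*$, extracts from $[\Pi,\Pi]=0$ the relation $[v_0,\sigma_0]=0$, and observes that the modular vector field $v_0=\Pi^\sharp(df/f)|_Y$ is therefore a transverse infinitesimal symmetry of $\mathcal F_Y$. For any local defining $1$-form $\eta$ of $\mathcal F_Y$ the quotient $\eta/\eta(v_0)$ is then closed, and these local forms patch because a change of defining equation $f$ alters $v_0$ by a Hamiltonian (hence $\mathcal F_Y$-tangent) vector field. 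The two constructions give the same object up to sign --- from $i_{\Pi^\sharp(df/f)}\Omega=df/f$ one finds $(\mathrm{Res}_Y\Omega)(v_0)=-1$ --- but your route replaces the explicit Taylor computation and the patching argument by the holomorphic log-Darboux normal form, a cleaner though less elementary input.

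One point in your last step needs correction. You claim that $\Omega$ acquires a \emph{simple} pole along each other component $Z$ of $\D(\Pi)$, but the theorem places no reducedness hypothesis on those components, so the pole order of $\Omega$ along $Z$ is not controlled and the behaviour of $\mathrm{Res}_Y\Omega$ near $Z\cap Y^*$ cannot be read off that way. The polar divisor is more reliably obtained from your own identity $\omega=\omega_0/s$: the section $s$ has zero divisor exactly $(\D(\Pi)-Y)\cap Y^*$, while the divisorial zero locus of $\omega_0=(\Pi|_{Y^*})^{n-1}$ is $\D(\Pi|_{Y^*})$ by definition, so it only remains to verify the inequality of divisors $\D(\Pi|_{Y^*})\le (\D(\Pi)-Y)\cap Y^*$. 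This follows at once from the factorisation $(y^{-1}\Pi^n)|_{Y^*}=v_0\wedge\sigma_0^{\,n-1}$, since any order of vanishing of $\sigma_0^{\,n-1}$ along a prime divisor of $Y^*$ is inherited by $v_0\wedge\sigma_0^{\,n-1}$.
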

\begin{proof}
Let $p \in Y^*$ be a smooth point of $Y$. At a sufficiently small neighborhood of $p$ we can choose
local analytic coordinates $(x_1, \ldots, x_{2n-1},y)$ such that $Y=\{y=0\}$ and
\[
\Pi = y \frac{\partial}{\partial y} \wedge \left( \sum_i y^i v_i  \right)+ \left(\sum y^i \sigma_i \right)
\]
where $v_i$ are vector fields in $\{ y=0\}$, and $\sigma_i$ are a bivector fields in $\{ y=0\}$.
Since $Y$ is a reduced irreducible component of $\D(\Pi)$  and
\[
\Pi^n = y \frac{\partial} {\partial y} \wedge v_0 \wedge \sigma_0^{n-1} + y^2 \Theta
\]
for some holomorphic $2n$-vector field $\Theta$,  it follows that
$v_0 \wedge \sigma_0^{n-1} \neq 0$. But $\sigma_0^{n-1}$ defines the symplectic foliation
$\mathcal F_Y$ on $Y$, hence $\mathcal F_Y$ has codimension 1.

The integrability condition $[\Pi,\Pi]$ implies the identity $[v_0,\sigma_0]=0$. Therefore,
$v_0$ is an infinitesimal symmetry of $\mathcal F_Y$. Moreover,  $v_0 \wedge \sigma_0^{n-1} \neq 0$
implies that $v_0$ is generically transverse to $\mathcal F_Y$.
Hence, if $\eta$ is  any $1$-form
defining $\mathcal F_Y$ at a neighborhood of $p$ then the $1$-form $\frac{\eta}{\eta(v_0)}$
is closed, see, for instance, \cite[Corollary 2]{Percy}.
Note that  $\frac{\eta}{\eta(v_0)}$
has no divisorial components in its zero set, and its polar divisor is equal to
$\{\sigma_0^{n-1} \wedge v_0 = 0\} - \{ \sigma_0^{n-1}=0\}$.
Since
\[
\left\{ \left(y^{-1} {\Pi^n}\right) _{| \{ y=0\} } =0 \right\}  = \{ \sigma_0^{n-1} \wedge v_0 =0  \}  \, ,
\]
we see that the description of the poles of local closed meromorphic $1$-form $\frac{\eta}{\eta(v_0)}$ is in accordance
with the description of the poles of the sought global closed meromorphic $1$-form $\omega$.

Now take a covering of a neighborhood of  $Y^*$ by sufficiently small open subsets $U_i$ of $X - \sing(Y)$. Let $f_i$
be local equations for $Y$, satisfying $f_i = f_{ij} f_j$ for some $f_{ij} \in \mathcal O^*(U_i\cap U_j)$. Then
\[
\Pi^{\sharp} \left(\frac{df_i}{f_i} \right) - \Pi^{\sharp} \left(\frac{df_j}{f_j} \right) = \Pi^{\sharp}\left( \frac{d f_{ij}}{f_{ij}} \right)\, .
\]
The right-hand side is a Hamiltonian vector field, and in particular it is tangent to $Y$ and its restriction to $Y$
is tangent to $\mathcal F_Y$. The summands in the left-hand side are Poisson vector fields, also tangent to $Y$, but
their restrictions to $Y$ are equal to generically transverse infinitesimal symmetries of $\mathcal F_Y$. Therefore, if $\mathcal F_{Y}$
is defined by a collection of $1$-forms $\omega_i \in \Omega^1_{Y^*}(U_i)$ then over the open sets $Y^* \cap U_i\cap U_j$ we have the equality
\[
\frac{\omega_i}{i_{\Pi} (\frac{df_i}{f_i} \wedge \omega_i)}= \frac{\omega_j}{i_{\Pi} (\frac{df_j}{f_j} \wedge \omega_j )} \, .
\]
As argued before, these $1$-forms are closed and therefore patch together to give the sought closed meromorphic $1$-form defining
$\mathcal F_Y$ on $Y^*$.
\end{proof}

\subsection{Irreducible components of the degeneracy divisor}
After the brief digression about the symplectic foliation on general reduced and irreducible components of
the degeneracy divisor of generically symplectic Poisson structures  we  return to the proof of Theorem \ref{TI:A}.

\begin{lemma}\label{Lemma:NF=OX}
If $X$ is a projective manifold with $\Pic X= \mathbb Z$ then there is no foliation on $X$ with trivial normal bundle.
Moreover, there is no smooth codimension 1 foliation $\mathcal{F}$ on $X$.
\end{lemma}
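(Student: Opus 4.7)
The plan is to prove the two assertions in sequence; the second will reduce to the first via Bott's vanishing theorem.

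For the first assertion in the codimension-one case (which is what is ultimately needed in the sequel), a foliation $\mathcal F$ with $N\mathcal F \simeq \mathcal O_X$ is defined by a nonzero global $1$-form $\omega \in H^0(X, \Omega^1_X)$. The hypothesis $\Pic X = \mathbb Z$ forces $\Pic^0 X = 0$, so $h^{0,1}(X) = 0$, and by Hodge symmetry $h^{1,0}(X) = 0$; hence $H^0(X, \Omega^1_X) = 0$, a contradiction. The same argument extends to any codimension if ``trivial normal bundle'' is read as $N\mathcal F \simeq \mathcal O_X^{\oplus q}$: the $q$ trivializing sections of the conormal bundle yield $q$ linearly independent global $1$-forms, again contradicting $h^{1,0}(X) = 0$. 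Under the weaker reading $\det N\mathcal F \simeq \mathcal O_X$, the defining form lies in $H^0(X, \Omega^q_X)$, and this group vanishes in the Fano context relevant to Theorem~\ref{TI:A} by Kodaira vanishing applied to the ample bundle $-K_X$.

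For the second assertion, suppose $\mathcal F$ is a smooth codimension $1$ foliation. Then $N\mathcal F$ is an honest line bundle, and writing $\Pic X = \mathbb Z$ with ample generator $H$, we have $N\mathcal F \simeq \mathcal O_X(aH)$ for some integer $a$. I would then invoke Bott's vanishing theorem for regular foliations, which in codimension one asserts $c_1(N\mathcal F)^k = 0$ in $H^{2k}(X, \mathbb R)$ for every $k \ge 2$. In particular $a^2 c_1(H)^2 = c_1(N\mathcal F)^2 = 0$ in $H^4(X, \mathbb R)$. Since $H$ is ample and $\dim X \ge 2$, the top self-intersection $c_1(H)^{\dim X}$ is strictly positive; splitting it as $c_1(H)^2 \cdot c_1(H)^{\dim X - 2}$ shows that $c_1(H)^2$ cannot vanish. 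Therefore $a = 0$, $N\mathcal F$ is trivial, and the first assertion yields a contradiction.

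The step I would worry about is the invocation of Bott's vanishing theorem: the conclusion is classical, but its original proof is differential-geometric (Chern--Weil theory applied to a connection adapted to the foliation), and one must be careful to work in a cohomology ring in which both the Bott vanishing and the positivity of top intersections of an ample class coexist cleanly. Once it is in hand, the rest is an elementary manipulation.
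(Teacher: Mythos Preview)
Your proof is correct and follows essentially the same route as the paper. The first assertion is handled identically: $\Pic X=\mathbb Z$ forces $h^{1}(X,\mathcal O_X)=0$, hence $H^0(X,\Omega^1_X)=0$ by Hodge symmetry, so no codimension-one foliation with trivial normal bundle exists. For the second assertion the paper invokes the Baum--Bott formula rather than Bott vanishing, but these are two faces of the same coin: Baum--Bott is the residue localization of Bott's obstruction, and for a \emph{smooth} foliation both yield the vanishing of $c_1(N\mathcal F)^{\dim X}$ (Bott gives the stronger $c_1(N\mathcal F)^2=0$, which you then cup with $c_1(H)^{\dim X-2}$). Your worry is unnecessary: Bott vanishing is stated in de Rham cohomology, the Hodge--Riemann positivity of powers of an ample class lives in the same $H^{\ast}(X,\mathbb R)$, and the two statements combine without issue. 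Your digression on the various readings of ``trivial normal bundle'' in higher codimension is not needed here---the paper only uses the codimension-one case---but it does no harm.
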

\begin{proof}
If the normal bundle of $\mathcal F$ is trivial then  $\mathcal F$ is defined
by a non-zero section of $\Omega^1_X$. But according to the  Hodge decomposition
we have that  $H^0(X,\Omega^1_X)\simeq H^1(X,\mathcal{O}_X)$, and as  $\Pic X = \mathbb Z$, the latter group is zero. This proves the first part of the statement.

For the second part, note that if $\mathcal F$ has no singularities then Baum-Bott formula
implies that $c_1(N\mathcal F)^{\dim X}=0$. Since $\Pic X = \mathbb Z$, it follows that $N \mathcal F= \mathcal O_X$ and we can conclude as before.
\end{proof}

\begin{prop}\label{Prop:Singularset}
Let $X$ be a Fano manifold of dimension $2n\ge 4$ with cyclic Picard group.
Let $\Pi\in H^0(X,\bigwedge^2 TX)$ be a generically symplectic Poisson structure.
 Assume that $\D(\Pi)$ is a reduced simple normal crossing divisor.
 If $Y$ is an irreducible component of $\D(\Pi)$ and $\Pi_{|Y}$ is the induced Poisson structure on $Y$  then
$\D (\Pi_{|Y})\subsetneqq( \D(\Pi) - Y )\cap Y$. Moreover, there exists another irreducible component $Z$ of $\D(\Pi)$ such that   the induced Poisson structure
 on $Y\cap Z$ is generically symplectic.
\end{prop}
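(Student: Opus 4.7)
I would prove the first part by contradiction and then extract the second part from the resulting strict inclusion.

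\emph{First part.} Suppose for contradiction that $\D(\Pi_{|Y}) = (\D(\Pi)-Y)\cap Y$. Then the residue divisor $E$ of Proposition~\ref{P:log} is empty, so the symplectic foliation $\mathcal F_Y$ on $Y$ is defined by a form $\omega \in H^0(Y,\Omega^1_Y)$, which is nonzero because Theorem~\ref{Thm:Polishchuk} forces $\mathcal F_Y$ to have codimension one. Tracing the construction $\omega = (\Pi_{|Y})^{n-1}/s$ in the proof of Proposition~\ref{P:log} shows that the divisor of $\omega$ equals $\D(\Pi_{|Y}) - (\D(\Pi)-Y)\cap Y = 0$, so $\omega$ is nowhere vanishing on divisors and $N\mathcal F_Y \cong \mathcal O_Y$. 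Since $\Pic X = \mathbb Z$ is generated by an ample class, $Y$ is a smooth ample divisor in $X$, and because $\dim X \ge 4$ Lefschetz's theorem for Picard groups yields $\Pic Y = \mathbb Z$. Lemma~\ref{Lemma:NF=OX} forbids a foliation with trivial normal bundle on such a $Y$, contradicting the existence of $\omega$.

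\emph{Second part.} The strict inclusion just obtained produces an irreducible component $W$ of $(\D(\Pi)-Y)\cap Y$ not contained in $\D(\Pi_{|Y})$; write $W$ as an irreducible component of $Y \cap Z$ for some component $Z \neq Y$ of $\D(\Pi)$. At a generic point of $W$ the Poisson structure $\Pi_{|Y}$ has maximal rank $2n-2 = \dim(Y \cap Z)$. Because $Y$ and $Z$ are Poisson subvarieties of $X$, their intersection $Y \cap Z$ is Poisson in $X$ (the sum of two Poisson ideals is a Poisson ideal), and hence Poisson in $(Y,\Pi_{|Y})$ as well. The symplectic leaf of $\Pi_{|Y}$ through a generic point of $W$ has dimension $2n-2$ and, by Poisson-invariance of $Y \cap Z$, must remain inside $Y \cap Z$; it is therefore open in $Y \cap Z$, showing that $\Pi_{|Y\cap Z}$ is generically symplectic.

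\emph{Main obstacle.} The subtle point in the whole argument is securing $\Pic Y = \mathbb Z$ in the extremal case $Y = \D(\Pi)$, where $\deg Y = i(X)$ lies just outside the scope of Lemma~\ref{Lemma:Lefschetz}; one must invoke Lefschetz for Picard groups directly (the conclusion $\Pic Y = \mathbb Z$ needs only that $Y$ be a smooth ample divisor with $\dim X \ge 4$, not that $Y$ be Fano). Alternatively, one can bypass Lemma~\ref{Lemma:NF=OX} altogether by combining the short exact sequence $0 \to \mathcal O_X(-Y) \to \mathcal O_X \to \mathcal O_Y \to 0$ with Kodaira vanishing (which handles both $H^p(X,\mathcal O_X)$ for $p>0$ and $H^2(X,\mathcal O_X(-Y))$, whether $-K_X - Y$ is ample or trivial) and Hodge symmetry to force $H^0(Y,\Omega^1_Y) = 0$ directly, contradicting $\omega \neq 0$.
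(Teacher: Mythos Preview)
Your proof is correct and follows the paper's route exactly: if the inclusion were an equality, Proposition~\ref{P:log} would give a nonzero holomorphic $1$-form on $Y$, contradicting Lemma~\ref{Lemma:NF=OX} applied to $Y$; then any component $Z$ with $Y\cap Z\not\subset\D(\Pi_{|Y})$ yields rank $2n-2$ on the $(2n-2)$-dimensional Poisson subvariety $Y\cap Z$. Your attention to the case $\deg Y=i(X)$ makes explicit something the paper leaves implicit---Lefschetz for Picard groups gives $\Pic Y=\mathbb Z$ without the degree bound of Lemma~\ref{Lemma:Lefschetz}, which is all Lemma~\ref{Lemma:NF=OX} needs.
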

\begin{proof}
Theorem \ref{Thm:Polishchuk} implies $\D(\Pi_{|Y})\subseteq(\D(\Pi) - Y)\cap Y$.
We want to  prove that the inclusion is strict.
If the inclusion is not strict then Proposition \ref{P:log} implies that $\mathcal F_Y$
is defined by a global holomorphic $1$-form. But this contradicts  Lemma \ref{Lemma:NF=OX},
proving that the inclusion must be strict. In particular, the generic rank of $\Pi_{|Y}$ is equal 
to $2n-2$.

If we take an irreducible component $Z$ of $\D(\Pi)$ such that $Y\cap Z$ is not contained
in $\D(\Pi_{|Y})$ then the induced Poisson structure on $Y\cap Z$ has rank $2n-2 = \dim (Y \cap Z)$  
and is therefore generically symplectic on $Y\cap Z$.
 \end{proof}

\begin{prop}\label{P:k>=3}
Let $X$ be a Fano manifold of dimension $2n\ge 4$ with cyclic Picard group.
Let $\Pi\in H^0(X,\bigwedge^2 TX)$ be a generically symplectic Poisson structure.
If $\D(\Pi)$ is a reduced simple normal crossing divisor then $\D(\Pi)$ has at least 3
distinct irreducible components. Moreover, if $\D(\Pi)$ has exactly three irreducible components
then for any irreducible component $Y$ of $\D(\Pi)$,  the symplectic  foliation $\mathcal F_Y$ on $Y$ has normal
bundle equal to $KY^*$ and trivial {canonical} bundle.
\end{prop}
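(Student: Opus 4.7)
My plan is to isolate one cohomological vanishing and apply it twice. The main input I would establish first is that for any smooth projective $Y$ with $\Pic Y=\mathbb{Z}$ and any smooth irreducible effective divisor $D\subset Y$,
\[
H^0(Y,\Omega^1_Y(\log D))=0.
\]
This follows from the residue sequence $0\to\Omega^1_Y\to\Omega^1_Y(\log D)\to\mathcal{O}_D\to 0$: on the one hand $H^0(Y,\Omega^1_Y)=H^{1,0}(Y)=0$, because $h^{0,1}(Y)=\dim\Pic^0(Y)=0$; on the other hand the connecting map $H^0(\mathcal{O}_D)=\mathbb{C}\to H^1(Y,\Omega^1_Y)$ sends $1$ to $c_1(\mathcal{O}_Y(D))$, which is nonzero in $H^{1,1}(Y)$ since $[D]\neq 0$ in $\Pic Y=\mathbb{Z}$.

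To apply this vanishing on each component $Y$ of $\D(\Pi)$, I first note that $\D(\Pi)$ is linearly equivalent to $-K_X$, so every irreducible component has degree strictly less than $i(X)$ (the other components being effective of positive degree), and Lemma~\ref{Lemma:Lefschetz} gives $\Pic Y=\mathbb{Z}$. If $Z$ is another component, the SNC hypothesis makes $W=Y\cap Z$ smooth, and $W$ is ample on the $(2n-1)$-fold $Y$; Kodaira vanishing applied to $\mathcal{O}_Y(-W)$ yields $H^0(W,\mathcal{O}_W)=\mathbb{C}$, so $W$ is connected and therefore irreducible.

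For Part (1), the case of a single component is excluded immediately by Proposition~\ref{Prop:Singularset}, which would require $\D(\Pi_{|Y})\subsetneq\emptyset$. In the two-component case $\D(\Pi)=Y+Z$, Proposition~\ref{Prop:Singularset} gives $\D(\Pi_{|Y})\subsetneq W$; since $W$ is irreducible of codimension one, this strict inclusion forces $\D(\Pi_{|Y})=0$, and hence $E=W$. Proposition~\ref{P:log} then produces a nonzero element of $H^0(Y,\Omega^1_Y(\log W))$ with nonzero residue on $W$, contradicting the vanishing above.

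For Part (2), write $\D(\Pi)=Y_0+Y_1+Y_2$, fix $Y=Y_i$, and let $W_1,W_2$ be the two smooth irreducible divisors $Y\cap Y_j$ $(j\ne i)$ on $Y$. Proposition~\ref{Prop:Singularset} combined with the irreducibility of each $W_j$ forces $\D(\Pi_{|Y})\in\{0,W_1,W_2\}$. If $\D(\Pi_{|Y})$ equalled $W_j$, then $E$ would be the other smooth irreducible divisor $W_{j'}$, and Proposition~\ref{P:log} would supply a nonzero section of $H^0(Y,\Omega^1_Y(\log W_{j'}))$, again ruled out by the vanishing. Hence $\D(\Pi_{|Y})=0$, and the identities $\det T\mathcal{F}_Y=\mathcal{O}_Y(\D(\Pi_{|Y}))$ and $N^*\mathcal{F}_Y=K_Y\otimes\det T\mathcal{F}_Y$ recalled in Section~2 translate this into $K_{\mathcal{F}_Y}=\mathcal{O}_Y$ and $N\mathcal{F}_Y=K_Y^*$. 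The only non-routine step in the argument is the cohomological vanishing; the rest is bookkeeping around Propositions~\ref{Prop:Singularset} and~\ref{P:log}.
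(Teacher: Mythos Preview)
Your proof is correct and follows essentially the same approach as the paper. The paper phrases the key obstruction as ``the residue of $\omega$ is a $\mathbb{C}$-divisor with zero Chern class, hence the polar set has at least two irreducible components,'' which is precisely your cohomological vanishing $H^0(Y,\Omega^1_Y(\log D))=0$ for a single irreducible $D$, unpacked via the residue sequence; your treatment is somewhat more explicit (e.g.\ you spell out the connectedness of $Y\cap Z$ and the case analysis) but the underlying argument is the same.
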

\begin{proof}
Let $Y$ be an irreducible component of $\D(\Pi)$. The symplectic foliation on $Y$ is defined by a logarithmic $1$-form
$\omega$. The residue of $\omega$ is a $\mathbb C$-divisor with zero Chern class. Therefore the polar set of $\omega$ has
at least two distinct irreducible components. As $\Pic Y = \mathbb Z$,  Lemma \ref{Lemma:Lefschetz}, any other irreducible component
$Z$ of the simple normal crossing divisor $\D(\Pi)$ intersects $Y$ along an smooth and irreducible hypersurface. As $(\omega)_{\infty}$ is
contained in $(\D(\Pi)-Y )\cap Y)$ it follows that $\D(\Pi)$ must have at least two other irreducible components besides $Y$.

The symplectic foliation on $Y$ is defined by a logarithmic $1$-form with
polar set equal to $(\D(\Pi) - Y) \cap Y - \D(\Pi_{|Y})$, and normal bundle given by
the associated line-bundle. If we have only three irreducible components in $\D(\Pi)$   then
$\D(\Pi_{|Y})$  is the zero divisor as there are no holomorphic $1$-forms on $Y$, and any  logarithmic $1$-form
must have at least two irreducible components in its polar divisor by the residue theorem.
Therefore $N\mathcal F_{Y} = KY^* $ and $K \mathcal F_Y = \mathcal O_Y$.
\end{proof}

\begin{cor}\label{C:k>=3}
If  a Fano manifold of dimension $2n\ge 4$ with cyclic Picard group admits
a generically symplectic Poisson structure with simple normal crossing degeneracy divisor then the index of $X$ is at least 3.
\end{cor}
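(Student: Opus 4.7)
The plan is to translate the ``at least three irreducible components'' conclusion of Proposition \ref{P:k>=3} into a numerical inequality on the index, using the fact that the Picard group is cyclic.

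First I would identify the class of the degeneracy divisor in the Picard group. Since $\Pi$ is generically symplectic on $X$ of dimension $2n$, the top power $\Pi^n$ is a section of $\wedge^{2n} TX \simeq K_X^{-1}$, and its zero divisor is precisely $\D(\Pi)$. Hence in $\Pic X$ one has
\[
  [\D(\Pi)] = [K_X^{-1}] = i(X) \cdot [H],
\]
where $H$ is the ample generator of $\Pic X \simeq \mathbb Z$.

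Next I would decompose $\D(\Pi)$ into irreducible components. By Proposition \ref{P:k>=3} we have $\D(\Pi) = Y_1 + \cdots + Y_k$ with $k \geq 3$ distinct irreducible components, and since $\Pic X = \mathbb Z \cdot [H]$ with $H$ ample, each effective irreducible divisor $Y_i$ satisfies $[Y_i] = d_i \cdot [H]$ for some integer $d_i \geq 1$. Summing these classes gives
\[
  i(X) \cdot [H] = \sum_{i=1}^{k} d_i \cdot [H],
\]
so $i(X) = \sum d_i \geq k \geq 3$, which is exactly the claimed bound.

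There is essentially no obstacle: the corollary is a direct counting consequence of Proposition \ref{P:k>=3} once one observes that on a Fano manifold with cyclic Picard group every irreducible hypersurface has degree at least one with respect to the ample generator. The only thing to double-check is that $\Pi^n$ really defines an effective Cartier divisor whose class is $-K_X$, which follows from the fact that a global section of a line bundle has pure codimension-one zero scheme (so no ``hidden'' higher-codimension contribution modifies the class).
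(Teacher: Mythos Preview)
Your proof is correct and is essentially the same as the paper's: both observe that $K_X^{-1} = \mathcal{O}_X(\D(\Pi))$, so the index equals the sum of the degrees of the irreducible components of $\D(\Pi)$, and then invoke Proposition~\ref{P:k>=3} to get at least three components, each of degree $\ge 1$. Your write-up just spells out in more detail why each $d_i \ge 1$ and why $[\D(\Pi)] = -K_X$.
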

\begin{proof}
Since  $KX^*  = \mathcal O_X(\D(\Pi))$, the index of $X$ is the sum of the degrees of the irreducible components of $\D(\Pi)$.
Proposition \ref{P:k>=3} implies
$i(X)\ge 3$ as wanted.
\end{proof}

\subsection{Induction argument} Recall from the Section 1 that a diagonal Poisson structure on $\mathbb P^n$ is
defined on a suitable affine chart by a bivector field of the form $\sum_{ij} \lambda_{ij} x_i x_j \frac{\partial}{\partial x_i} \wedge \frac{\partial}{\partial x_j}$.

\begin{prop}\label{P:concluir}
Let $\Pi$ be a generically symplectic  Poisson structure on $\mathbb P^{2n}$. If $\D(\Pi)$ is the union
of $2n+1$ hyperplanes in general position then $\Pi$ is projectively equivalent to a diagonal Poisson structure.
\end{prop}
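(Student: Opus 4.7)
The plan is to identify $\Pi$ with a section of the logarithmic bivector sheaf $\wedge^2 T_{\mathbb{P}^{2n}}(-\log D)$ associated with the toric boundary of $\mathbb{P}^{2n}$, and then to exploit the fact that this sheaf is trivial. After a projective change of coordinates I will assume $\D(\Pi)=D:=\sum_{I=0}^{2n}\{x_I=0\}$ is the toric boundary of $\mathbb{P}^{2n}$ with its standard torus action. Each coordinate hyperplane is a Poisson subvariety, being a component of $\D(\Pi)$. In the affine chart $U_0=\{x_0\neq 0\}$ with coordinates $y_i=x_i/x_0$, the Poisson subvariety condition for each $\{y_i=0\}$ translates into $\{y_i,y_j\}\in(y_iy_j)$ for all $i<j$. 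Written intrinsically, this says that the anchor map $\Pi^\sharp\colon\Omega^1_{\mathbb{P}^{2n}}\to T_{\mathbb{P}^{2n}}$ factors through the logarithmic tangent sheaf $T_{\mathbb{P}^{2n}}(-\log D)$; equivalently, $\Pi\in H^0(\mathbb{P}^{2n},\wedge^2 T_{\mathbb{P}^{2n}}(-\log D))$.

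Next, I will invoke the standard fact that for the toric boundary of a smooth complete toric variety the logarithmic tangent sheaf is canonically isomorphic to the trivial bundle $N\otimes\mathcal{O}$, where $N$ is the cocharacter lattice of the torus. For $\mathbb{P}^{2n}$ one has $N\simeq\mathbb{Z}^{2n}$, with explicit trivialization given on $U_0$ by the monomial vector fields $y_i\partial_{y_i}$. Consequently $\wedge^2 T_{\mathbb{P}^{2n}}(-\log D)\simeq\mathcal{O}^{\binom{2n}{2}}$, and its global sections are spanned by the log bivectors $y_i\partial_{y_i}\wedge y_j\partial_{y_j}=y_iy_j\,\partial_{y_i}\wedge\partial_{y_j}$. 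Writing $\Pi$ on $U_0$ in this basis yields
\[
\Pi=\sum_{i<j}c_{ij}\,y_iy_j\,\partial_{y_i}\wedge\partial_{y_j}\,,
\]
which is the required diagonal form.

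I do not foresee a substantial obstacle. The local equivalence between the Poisson subvariety condition and the logarithmic factorization of $\Pi^\sharp$ is a direct verification in coordinates adapted to the normal crossing divisor, and the triviality of the logarithmic tangent bundle at the toric boundary is standard toric geometry. If one prefers to remain within the more elementary framework of the paper, the toric input can be replaced by a direct computation: starting from $\{y_i,y_j\}=y_iy_jq_{ij}$ with $q_{ij}$ a polynomial (from the Poisson subvariety condition on $U_0$), transport to the chart $U_1=\{x_1\neq 0\}$ via $y_1=1/z_0$, $y_k=z_k/z_0$, and use both the regularity of $\Pi$ on $U_1$ and the Poisson subvariety condition for the remaining hyperplane $\{x_0=0\}$ to force each $q_{ij}$ to be constant.
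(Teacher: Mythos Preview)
Your proof is correct, and it is a genuinely different---and more conceptual---packaging of the same underlying idea used in the paper. The paper's argument lifts $\Pi$ to a homogeneous quadratic Poisson structure $\tilde\Pi$ on $\mathbb{C}^{2n+1}$ and then computes directly: the Hamiltonian vector field of $x_m$ must preserve every coordinate hyperplane $\{x_{m'}=0\}$, which forces each coefficient $a_{ij}^{mm'}x_ix_j$ to be divisible by $x_{m'}$; by symmetry in $m,m'$ this kills all off-diagonal terms. You stay on $\mathbb{P}^{2n}$ and observe that the very same invariance condition is equivalent to $\Pi^\sharp$ factoring through $T_{\mathbb{P}^{2n}}(-\log D)$, hence (by antisymmetry) $\Pi\in H^0(\mathbb{P}^{2n},\wedge^2 T_{\mathbb{P}^{2n}}(-\log D))$; the toric fact $T_{\mathbb{P}^{2n}}(-\log D)\simeq N\otimes\mathcal O$ then finishes immediately. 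Your approach makes the result transparent and generalizes at once to any smooth projective toric variety with its full boundary, at the cost of invoking a (standard) structural fact from toric geometry; the paper's approach is entirely self-contained and elementary. The fallback you sketch---showing the $q_{ij}$ are constant by passing to a second chart---is essentially the paper's computation carried out in inhomogeneous rather than homogeneous coordinates.
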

\begin{proof}
A Poisson structure $\Pi$ on $\mathbb P^{2n}$ is determined by a Poisson structure $\tilde \Pi$ on $\mathbb C^{2n+1}$ defined by
a homogenous quadratic bivector field, see for instance \cite[Theorem 12.1]{Polishchuk} and  \cite{Bondal}. Since the hyperplanes in $\D(\Pi)$ are in general position, we can choose homogeneous coordinates
such that $\D(\Pi)= \{ x_0 \cdots x_{2n} = 0 \}$.

If we write
\[
\tilde \Pi = \sum_{i\le j;k < l} a_{ij}^{kl} x_i x_j \frac{\partial}{\partial x_k} \wedge \frac{\partial}{\partial x_l}
\]
then the Hamiltonian vector field induced by $x_m$  is
\[
\sum_{i\le j; m< l}   a_{ij}^{ml}  x_i x_j \frac{\partial}{\partial x_l}  - \sum_{i\le j; k< m}   a_{ij}^{km}  x_i x_j \frac{\partial}{\partial x_k} \, .
\]
For every  $m'$, the hypersurface $\{ x_{m'} =0\}$ is invariant by this vector field. Thus, if $m<m'$ then  $a_{ij}^{mm'}x_i x_j$ must be  divisible by $x_{m'}$. Changing the
roles of $m$ and $m'$, we  conclude that $a_{ij}^{mm'}=0$ unless $\{ i,j\} = \{ m, m'\}$. It follows that $\tilde \Pi$ is diagonal and so is $\Pi$.
\end{proof}

\begin{prop}\label{P:inducao}
 If Theorem \ref{TI:A} holds for Fano manifolds   {with cyclic Picard group} of dimension $2n$, with $n\geq2$, then it also holds for Fano manifolds   {with cyclic Picard group} of dimension  $2(n+1)$.
\end{prop}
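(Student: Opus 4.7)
The plan is to reduce to dimension $2n$ by passing to a codimension-two Poisson subvariety $W$, apply the induction hypothesis to $W$, and then reconstruct $X$ and $\Pi$ from the resulting information about $W$ and $\Pi_{|W}$.

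Let $X$ be a Fano manifold of dimension $2(n+1)$ with $\Pic X=\mathbb Z$, and let $\Pi$ be a generically symplectic Poisson structure whose degeneracy divisor $\D(\Pi)=H_1\cup\cdots\cup H_k$ is reduced and simple normal crossing. First, I would pick an arbitrary irreducible component $Y:=H_1$ and then, using Proposition \ref{Prop:Singularset}, a second irreducible component $Z:=H_2$ such that the induced Poisson structure on $W:=Y\cap Z$ is generically symplectic. By Proposition \ref{P:k>=3} we have $k\ge 3$, so $\deg Y\le i(X)-2$ and $\deg Y+\deg Z\le i(X)-1$, which via the adjunction identity $i(Y)=i(X)-\deg Y$ guarantees $\deg(Z_{|Y})<i(Y)$. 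Since $\dim Y=2n+1\ge 5$, Lemma \ref{Lemma:Lefschetz} applied first to $Y\subset X$ and then to $W\subset Y$ shows that both $Y$ and $W$ are Fano with cyclic Picard group. In particular $\dim W=2n$ and $\Pi_{|W}$ is a generically symplectic Poisson structure on a Fano manifold with cyclic Picard group.

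The next task is to verify that $\D(\Pi_{|W})$ is a reduced simple normal crossing divisor on $W$. Theorem \ref{Thm:Polishchuk} yields the containment $\D(\Pi_{|W})\subseteq \bigcup_{i\ge 3}(H_i\cap W)$, and the SNC hypothesis on $\D(\Pi)$ immediately implies that each nonempty $H_i\cap W$ is a smooth hypersurface of $W$ and that their union is SNC on $W$. I expect the reducedness of $\D(\Pi_{|W})$ along each $H_i\cap W$ that appears to be the main technical obstacle; it should follow from a local computation in coordinates adapted to the SNC structure of $\D(\Pi)$ at a generic point of $H_1\cap H_2\cap H_i$, where Theorem \ref{Thm:Polishchuk} pins down the rank of $\Pi$ tightly and where the reducedness of $\mathcal{I}_{2n}$ on $X$ forces the corresponding order of vanishing of $\Pi_{|W}^n$ to be one.

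With this in hand, the induction hypothesis applies to $(W,\Pi_{|W})$ and produces $W\simeq\mathbb P^{2n}$ with $\Pi_{|W}$ a diagonal Poisson structure, so $\D(\Pi_{|W})$ consists of exactly $2n+1$ coordinate hyperplanes of $W$. Each such hyperplane must be of the form $H_i\cap W$ for some $i\ge 3$, and distinct indices $i$ yield distinct hyperplanes of $W$: otherwise $W$ would be contained in $H_i\cap H_j$ for two distinct $i,j\ge 3$, producing four irreducible components of $\D(\Pi)$ containing the codimension-two subvariety $W$, contradicting SNC. Hence $k\ge 2n+3$. Combining this with $\sum_i\deg H_i=i(X)$ and the Kobayashi--Ochiai bound $i(X)\le \dim X+1=2n+3$, we conclude that $k=2n+3$, every $H_i$ is a hyperplane, $i(X)=2n+3$, and therefore $X\simeq\mathbb P^{2n+2}$. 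The SNC hypothesis on $\D(\Pi)$ automatically places these $2n+3$ hyperplanes in linear general position, so Proposition \ref{P:concluir} finishes the proof by identifying $\Pi$ with a diagonal Poisson structure.
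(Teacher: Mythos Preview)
Your argument follows the paper's proof essentially verbatim: pick $Y,Z$ via Proposition \ref{Prop:Singularset}, descend to $W=Y\cap Z$, apply the induction hypothesis, and invoke Kobayashi--Ochiai. The only cosmetic difference is in the final numerical step: the paper reads off $i(X)=\deg Y+\deg Z+i(W)=\deg Y+\deg Z+(2n+1)$ directly from adjunction on $W\subset Y\subset X$ and concludes $\deg Y=\deg Z=1$, whereas you instead count that the $2n+1$ hyperplane components of $\D(\Pi_{|W})$ arise from distinct $H_i$ with $i\ge 3$ to obtain $k\ge 2n+3$; both routes feed into the same Kobayashi--Ochiai bound. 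Your explicit acknowledgement that the reducedness of $\D(\Pi_{|W})$ requires a local check is fair---the paper simply asserts the equality $\D(\Pi_{|W})=(\D(\Pi)-Y-Z)\cap W$ without further comment.
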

\begin{proof}
Let $X$ be a Fano manifold  {with cyclic Picard group} of dimension $2(n+1)$ with a generically symplectic Poisson structure $\Pi$ having a reduced simple normal crossing degeneracy divisor.
 Let $Y$ and $Z$ be irreducible components of $\D(\Pi)$ as Proposition \ref{Prop:Singularset}. Then  $W = Y\cap Z$ is a Fano manifold {with cyclic Picard group} of dimension $2n$ and the Poisson structure $\Pi_{|W}$ is also generically symplectic. Moreover,  $D(\Pi_{|W})=(D(\Pi)-Y-Z)\cap W$ and consequently  $D(\Pi_{|W})$ is
 a reduced simple normal crossing divisor.
According to  our assumptions, $W=\lm{P}^{2n}$ and the adjunction formula says that  $i(X)=\deg Y+\deg Z+i(W)$.
Kobayashi-Ochiai Theorem implies that $\deg Y=\deg Z=1$ and $i(X)=2n+3$, i.e., $X=\lm{P}^{2n+2}$.
Since the degeneracy divisor of  $\Pi_{|W}$ is the union of  $2n+1$ hyperplanes in general position, we conclude that the singular set of $(X,\Pi)$
 consists of $2n+3$ hyperplanes in general position.
 \end{proof}

 \section{Poisson structures on Fano 4-folds}

Proposition \ref{P:inducao} reduces the proof of  Theorem \ref{TI:A} to the four-dimensional case which will be carried out in this section.

\subsection{Existence of  global vector field and constraints on the index} We will prove below, Proposition \ref{Prop:i(X)>3}, that to prove Theorem \ref{TI:A} in dimension 4 we
can assume that $X$ is $\mathbb P^n$ or an hyperquadric.

\begin{lemma}\label{Lemma:vectorfield}
Let $\Pi$ be a generically symplectic Poisson structure on a complex manifold $X$, with $\dim X\ge 4$.
Let $Y$ and $Z$ be two distinct effective  divisors which are
linearly equivalent. If $Y$ and $Z$ are Poisson divisors then there exists
a non-zero  global vector field $v \in H^0(X,TX)$ which is
a Hamiltonian vector field in $X\setminus(Y\cup Z)$.
\end{lemma}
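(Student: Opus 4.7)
The plan is to exploit the Polishchuk Poisson connections associated with the Poisson hypersurfaces $Y$ and $Z$. Since $Y\sim Z$, the line bundles $\mathcal{O}_X(Y)$ and $\mathcal{O}_X(Z)$ are isomorphic, and the difference of the two Polishchuk connections, viewed as connections on this common line bundle, is an $\mathcal{O}_X$-linear map $\mathcal{L}\to TX\otimes\mathcal{L}$, that is, the same datum as a global section of $TX$. I will realize this construction explicitly, verify that the resulting vector field is holomorphic, and check that it is non-zero and Hamiltonian off $Y\cup Z$.

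Since $Y$ and $Z$ are distinct linearly equivalent effective divisors, fix a non-constant rational function $\phi$ on $X$ with $(\phi)=Y-Z$. Locally write $\phi=f/g$ with $f,g$ regular defining equations of $Y,Z$ respectively. Comparing the Polishchuk formulas
\[
\nabla_Y s = -\Pi^\sharp\!\left(\tfrac{df}{f}\right)\otimes s, \qquad \nabla_Z s = -\Pi^\sharp\!\left(\tfrac{dg}{g}\right)\otimes s
\]
on a common local trivialization $s$, one is led to set
\[
v := -\Pi^\sharp(d\log\phi),
\]
a priori a meromorphic vector field on $X$ with polar set contained in $Y\cup Z$.

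The decisive step is to prove that $v$ is actually holomorphic everywhere. For this I would use the following reformulation of Polishchuk's condition that $H=\{f=0\}$ be a Poisson hypersurface: for every local regular function $h$ one has $\{h,f\}\in(f)$, so that $\Pi^\sharp(df/f)$ acts on $\mathcal{O}_X$ as the regular derivation $h\mapsto\{h,f\}/f$ and is therefore a holomorphic vector field. Applied near the smooth locus of $Y$ this controls the first summand of $d\log\phi$, and applied to $Z$ it controls the second. At a point of $Y\cap Z$ both potential poles must be handled simultaneously, but since each of $\Pi^\sharp(df/f)$ and $\Pi^\sharp(dg/g)$ is individually holomorphic by the argument just recalled, so is their difference. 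Consequently $v$ extends to a section of $H^0(X,TX)$.

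It remains to verify that $v\neq 0$ and that $v|_{X\setminus(Y\cup Z)}$ is Hamiltonian. Non-vanishing is immediate: since $\phi$ is non-constant, $d\log\phi$ is a non-zero meromorphic $1$-form, and since $\Pi$ is generically symplectic, $\Pi^\sharp$ is injective on a dense open subset of $X$, so $v$ is non-zero there. On $X\setminus(Y\cup Z)$ the function $\phi$ is holomorphic and nowhere vanishing, so any local branch of $-\log\phi$ is a regular function whose Hamiltonian coincides with $v$, giving the required local Hamiltonian description. The step I expect to require the most care is the holomorphy verification on $Y\cap Z$, where both local summands of $d\log\phi$ are singular and the Polishchuk-type argument must be applied to $f$ and $g$ in tandem within a single local coordinate system.
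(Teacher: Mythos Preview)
Your proposal is correct and follows essentially the same approach as the paper: both take the difference of the Polishchuk connections on $\mathcal{L}=\mathcal{O}_X(Y)\cong\mathcal{O}_X(Z)$ to produce the global vector field $v=\Pi^\sharp(d\log(f/g))$, and both conclude non-vanishing from the injectivity of $\Pi^\sharp$ and the Hamiltonian property from the local expression. The only cosmetic difference is that the paper invokes the standard fact that the difference of two connections is $\mathcal{O}_X$-linear (hence a global section of $TX$), whereas you verify holomorphy directly via the Poisson-ideal condition $\{h,f\}\in(f)$; these are two phrasings of the same computation, and your worry about $Y\cap Z$ is already handled by the observation you make, that each summand $\Pi^\sharp(df/f)$ and $\Pi^\sharp(dg/g)$ is separately holomorphic.
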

\begin{proof}
Since $\Pi$ is a  generically symplectic  Poisson structure,  the anchor morphism $\Pi^\sharp:\Omega^1_X\rar TX$ is injective.

Write $\mathcal{L}=\mathcal{O}_X(Y)=\mathcal{O}_X(Z)$. Consider the Polishchuk connections $\nabla_Y$ and $\nabla_Z$ associated to
$Y$ and $Z$ respectively. Then $\nabla_Y-\nabla_Z:\mathcal{L}\rar TX\otimes\mathcal{L}$ is a $\mathcal{O}_X$-linear map and
so it induces a global vector field $v\in H^0(X,TX)$.

If $Y$ and $Z$ are locally defined by $\{f=0\}$ and $\{g=0\}$, respectively,
then $v=\Pi^\sharp(d (\log\frac{f}{g}))$. Since $Y$ and $Z$ are distinct,  we have
$d (\log\frac{f}{g})\neq0$  and, as $\Pi$ is generically symplectic,  $v$ does not vanish identically.
From the local expression, we see that $v$ is Hamiltonian vector field
in $X\setminus(Y \cup Z)$.
\end{proof}

\begin{prop}\label{Prop:i(X)>3}
Let $X$ be a Fano manifold of dimension 4 with cyclic  Picard group.
If there exists a generically symplectic Poisson structure on $X$ with
simple normal crossing {degeneracy} divisor then the index of $X$ is at least 4. In
particular, $X$ is a four-dimensional hyperquadric or $\mathbb P^4$.
\end{prop}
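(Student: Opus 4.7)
The plan is to use Corollary \ref{C:k>=3}, which already gives $i(X) \ge 3$, and to rule out $i(X)=3$ by a contradiction. The Kobayashi--Ochiai theorem will then force $X$ to be a four-dimensional hyperquadric or $\mathbb{P}^4$.

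Suppose $i(X)=3$, and let $H$ denote the ample generator of $\Pic(X)$, so that $-K_X = 3H$. Since $\D(\Pi)\in|-K_X|$ has degree three and Proposition \ref{P:k>=3} forces it to have at least three irreducible components, each of degree at least one, it must decompose as $H_1+H_2+H_3$ with each $H_i$ of degree one. Proposition \ref{P:k>=3} further yields $\det T\mathcal{F}_Y = \mathcal{O}_Y$ for each component $Y$, and Lemma \ref{Lemma:Lefschetz} implies that $H_3$ is a Fano threefold of index two with cyclic Picard group.

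The core of the argument combines Proposition \ref{P:log} with this triviality. By Proposition \ref{P:log}, the symplectic foliation $\mathcal{F}_{H_3}$ is defined by a logarithmic $1$-form $\omega\in H^0(H_3, \Omega^1_{H_3}(\log(C_1+C_2)))$ with nonzero residues on both irreducible components $C_i := H_i\cap H_3$. Hodge theory and Kodaira vanishing give $H^0(\Omega^1_{H_3})=0$, so the residue exact sequence identifies $H^0(\Omega^1_{H_3}(\log(C_1+C_2)))$ with the kernel of the map $(a_1,a_2) \mapsto a_1[C_1]+a_2[C_2] \in H^{1,1}(H_3)$. Since $\Pic(H_3)=\mathbb{Z}$ forces $[C_1]=[C_2]$, this kernel is one-dimensional and spanned by $d\log(f_1/f_2)$, where $f_i \in H^0(H_3, \mathcal{O}_{H_3}(H))$ cuts out $C_i$. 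Thus $\mathcal{F}_{H_3}$ is the foliation whose leaves are the members of the pencil of hyperplane sections $\{f_1 - t f_2 = 0\}_{t\in\mathbb{P}^1}$.

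To finish, I would restrict the triviality of the foliation's determinant to a general leaf $L$ of this pencil. Since the foliation is regular along such an $L$, we have $T\mathcal{F}_{H_3 \, |L} = TL$, whence
\[
K_L \;=\; \det(TL)^{-1} \;=\; (\det T\mathcal{F}_{H_3})_{|L}^{-1} \;=\; \mathcal{O}_L.
\]
On the other hand, adjunction on $L\in|\mathcal{O}_{H_3}(H)|$ yields $K_L = (K_{H_3}+L)_{|L} = \mathcal{O}_{H_3}(-H)_{|L}$, which is antiample and hence nontrivial. This contradiction forces $i(X)\ge 4$. The step demanding the most care is verifying that a general leaf is smooth and irreducible, so that the identification $T\mathcal{F}_{H_3\,|L}=TL$ is valid: smoothness follows from Bertini applied after resolving the base locus $\{f_1=f_2=0\}$, while irreducibility (i.e.\ connectedness of a smooth member of the pencil) follows from $\Pic(H_3)=\mathbb{Z}$, since a nontrivial Stein factorization of $f_1/f_2$ would force the class of $\mathcal{O}_{H_3}(H)$ to be divisible by some $k>1$ in $\Pic(H_3)$.
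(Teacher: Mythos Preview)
Your reduction to the pencil foliation is correct and elegant: the residue exact sequence really does force $\mathcal{F}_{H_3}$ to be the foliation by members of the linear pencil $\langle C_1,C_2\rangle$. The gap is in the last step. The foliation $\mathcal{F}_{H_3}$ is \emph{not} regular along a general leaf $L$: the defining twisted $1$-form $f_2\,df_1 - f_1\,df_2$ vanishes along the base curve $B=C_1\cap C_2=\{f_1=f_2=0\}$, and $B$ lies in \emph{every} member of the pencil. In local coordinates with $f_1=x$, $f_2=y$, the tangent sheaf $T\mathcal{F}_{H_3}$ is generated by $\partial_z$ and $x\partial_x+y\partial_y$; restricting to $L_t=\{x=ty\}$ one finds that the image of $T\mathcal{F}_{H_3}$ in $TL_t$ is generated by $\partial_z$ and $y\,\partial_y$, which is strictly smaller than $TL_t$ along $\{y=0\}=B$. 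Concretely, since $B=C_1|_L\in|\mathcal{O}_L(H)|$, the two line bundles $(\det T\mathcal{F}_{H_3})|_L=\mathcal{O}_L$ and $K_L^{-1}=\mathcal{O}_L(H)$ differ exactly by $\mathcal{O}_L(B)$. So your computation only shows $K_L|_{L\setminus B}\simeq\mathcal{O}_{L\setminus B}$, which is no contradiction because $K_L=\mathcal{O}_L(-B)$ is indeed trivial off $B$. The smoothness of $L$ (which you carefully justify) is irrelevant here; what fails is the regularity of $\mathcal{F}_{H_3}$ along $L$.

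The paper takes a different route to exclude $i(X)=3$. From the linear equivalence $H_1\sim H_2$ it builds, via the Polishchuk connections, a nonzero global vector field $v\in H^0(X,TX)$ tangent to $H_3$ and to $\mathcal{F}_{H_3}$ (Lemma~\ref{Lemma:vectorfield}, with Wahl's theorem to ensure $v|_{H_3}\not\equiv 0$). Thus $H_3$ is an index-two Fano threefold carrying a foliation with trivial canonical bundle and $h^0(T\mathcal{F}_{H_3})>0$; results of Loray--Pereira--Touzet then force $H_3\simeq X_5$ with $\mathcal{F}_{H_3}$ the $\Aff(\mathbb{C})$-foliation, which is known not to be definable by a logarithmic form with poles on two hyperplane sections, contradicting Proposition~\ref{P:log}. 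If you want to push your pencil approach through instead, one viable ending is to locate an \emph{isolated} singularity of $\mathcal{F}_{H_3}$ away from $C_1\cup C_2$ (a singular member of the pencil) and then invoke Lemma~\ref{Lem:Technicallemma}; but this requires knowing that the pencil of $|H|$-sections on $H_3$ has singular members, which is a nontrivial input about the dual variety of each del~Pezzo threefold of index two.
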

\begin{proof}
As we  already know that $i(X) \ge 3$, see Corollary \ref{C:k>=3}, we can assume  $i(X)=3$.
Let $\Pi$ be a Poisson structure on $X$ satisfying the assumptions, and $Y$ be
an irreducible component of $\D(\Pi)$. Note that $\D(\Pi)$ has exactly three
irreducible components (say $Y,Z,$ and  $W$),
each one of them has degree 1, and  any two of them are linearly equivalent.

By Lemma \ref{Lemma:vectorfield}, we have a global vector field $v \in H^0(X,TX)$ induced
by $Z$ and $W$, which is tangent to $Y$. Wahl's theorem \cite{Wahl} ensures that $v$ does not
vanish identically along $Y$, and therefore we have a non-zero vector $v_Y \in H^0(Y,TY)$.
Since $v$ is a Hamiltonian vector field in $ X - ( Z \cup W)$, the vector field
$v_Y$ is tangent to the symplectic  foliation $\mathcal{F}_{Y}$ on $Y$.

Adjunction formula implies that $Y$ is a Fano $3$-fold of index two, and we have just proved that
$Y$  carries a foliation $\mathcal F_Y$ with trivial canonical bundle (Proposition \ref{P:k>=3}) which  {satisfies} $h^0(Y, T \mathcal F_Y)>0$.
This suffices to characterize $Y$ and $\mathcal F_Y$. On the one hand, \cite[{Corollary 7.4}]{croco3b}
implies that $Y$ is isomorphic to $X_5$, the unique Fano $3$-fold with Picard group generated by an element $H$ satisfying $H^3 = 5$.
On the other hand, \cite[Lemma 4.2 and Theorem 7.1]{croco3b} imply that the  foliation $\mathcal F_Y$ has trivial tangent bundle  and is induced by an action
of $\Aff(\mathbb C)$ in $X_5$.

One of the first steps of the  proof of \cite[Theorem 7.1]{croco3b} is to show that the foliation on $X_5$
induced by the action of $\Aff(\mathbb C)$ is not defined by a logarithmic $1$-form
with poles on two hyperplanes sections.  {This contradicts Proposition \ref{P:log}} and shows that $i(X) \ge 4$.
\end{proof}

\subsection{Singularities of logarithmic foliations}
Let $X$ be a four-dimensional Fano manifold with cyclic Picard group and let $\Pi$ be
a Poisson structure on $X$ with simple normal crossing degeneracy divisor.

Fix an irreducible component  $Y$ of $\D(\Pi)$. We want to analyze the singularities
of the symplectic  foliation $\mathcal F_{Y}$ on $Y$. Recall that  $\mathcal F_{Y}$
is defined by a logarithmic $1$-form $\omega \in H^0(Y,\Omega^1_Y(\log E))$ where
$E= (\D(\Pi) - Y ) \cap Y - \D(\Pi_{|Y})$. If $p$ is a point at the intersection of
$m$ irreducible components of $E$ then
\[
\omega = \sum_{i=1}^m \lambda_i \frac{dx_i}{x_i} + \beta \,
\]
where $x_1, \ldots, x_m$ are defining functions for the irreducible components of $E$ through $p$,
$\lambda_1, \ldots, \lambda_m$ are nonzero complex numbers and  $\beta$ is a holomorphic $1$-form.
It follows that $\mathcal F_Y$ does not have isolated singularities at a neighborhood of $E$. Instead
the singular set of $E$ coincides with the singular of $\mathcal F_Y$ at a neighborhood of $E$.
Since $E$ is an ample  normal crossing divisor
on $Y$ then components of  singular set of $\mathcal F_Y$ disjoint from $E$ must be zero dimensional,
for details see \cite{CukSoaVains}. The argument above also shows that
$\omega$, seen as a section of $\Omega^1_Y(\log E)$, does not have zeros at a neighborhood of $E$. In particular
the number of isolated singularities of $\mathcal F_Y$, counted with multiplicities, is equal to
the top Chern class of $\Omega^1_Y(\log E)$.

\begin{lemma}\label{Lem:Technicallemma}
 Let $\Pi$ be a generically symplectic Poisson structure on a Fano 4-fold $X$ with simple normal crossing
 degeneracy divisor.
If $Y$ is an irreducible component of $\D(\Pi)$ and $\mathcal F_Y$ is the  foliation on $Y$ induced by $\Pi$ then
any isolated singularity $p$ of $\mathcal F_Y$ lies at the intersection of at least 3 distinct irreducible
components of $\D(\Pi)$.
\end{lemma}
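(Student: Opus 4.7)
The plan is to rule out, using Theorem \ref{Thm:Polishchuk}, the possibility that $p$ lies on fewer than three components of $\D(\Pi)$, with the substantial work concentrated in the two-component rank-zero case.

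Suppose first that $p$ lies only on $Y$, i.e.\ $p\in H^{(1)}$. Then Theorem \ref{Thm:Polishchuk} gives $\rank_p \Pi=2$, so the symplectic leaf through $p$ is two-dimensional and tangent to $Y$; hence $\Pi_{|Y}$ has rank $2$ at $p$ and $\mathcal F_Y$ is smooth there, contradicting the hypothesis. Suppose instead that $p$ lies on exactly two components $Y$ and $Z$, so $p\in H^{(2)}$ and $\rank_p \Pi\in\{0,2\}$. In the rank-two case the symplectic leaf through $p$ is tangent to both $Y$ and $Z$, hence is contained in $Y\cap Z$, so again $\Pi_{|Y}$ has rank $2$ at $p$ and $\mathcal F_Y$ is smooth at $p$.

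It remains to treat the case $\rank_p \Pi = 0$ with only $Y$ and $Z$ through $p$. I would choose local coordinates $(x_1,x_2,y,z)$ with $Y=\{y=0\}$ and $Z=\{z=0\}$, and, using that $\Pi$ vanishes on $Y\cap Z$, write $\Pi=yA+zB$ for holomorphic bivectors $A,B$. Three pointwise constraints at $p$ then arise: the Poissonness of $Y$ and $Z$ give $(i_{dy}B)(p)=0$ and $(i_{dz}A)(p)=0$ respectively; and the condition that $\D(\Pi)=Y+Z$ has multiplicity one, together with $\Pi^2 = 2yz\,A\wedge B + y^2 A^2 + z^2 B^2$, forces $(A\wedge B)(p)\ne 0$. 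A short pointwise computation with these constraints shows that $A(p)$ must contain a nonzero term of the form $\alpha\wedge\partial_y$ with $\alpha\in\langle\partial_{x_1},\partial_{x_2}\rangle$.

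Finally, consider the vector field $v:=\Pi^\sharp(dy/y)$, which is well-defined in a neighborhood of $Y$ because $Y$ is Poisson. Since $dy/y$ is closed, $v$ is a Poisson vector field on $X$; being tangent to $Y$, its restriction $v_{|Y}$ is a Poisson vector field on $Y$, so its flow preserves $\mathcal F_Y$ and in particular $\sing(\mathcal F_Y)$. A direct computation from the local expressions above gives $v_{|Y}(p)=-\alpha\ne 0$, so the integral curve of $v_{|Y}$ through $p$ is a one-dimensional curve contained in $\sing(\mathcal F_Y)$, contradicting the assumption that $p$ is an isolated singularity. The main obstacle is the pointwise algebraic bookkeeping that extracts $\alpha\ne 0$ from the interaction between the two Poissonness constraints and the multiplicity-one condition on $\D(\Pi)$.
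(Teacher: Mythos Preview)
Your proof is correct and takes a genuinely different route from the paper's.

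Both arguments begin by reducing to the case where exactly two components $Y,Z$ of $\D(\Pi)$ pass through $p$ and $\Pi(p)=0$. From there the paper studies the \emph{linear part} $\Pi_1$ of $\Pi$ at $p$: since $\D(\Pi)=\{yz=0\}$ is reduced, $\Pi^2$ has order exactly $2$ at $p$, so $\Pi_1\wedge\Pi_1\neq 0$. The paper then invokes the classification of four--dimensional complex Lie algebras \cite{BurdeSteinhoff} to list the three linear Poisson structures with $\Pi_1\wedge\Pi_1\neq 0$; two of them are discarded because $\Pi_1\wedge\Pi_1$ is a perfect square (incompatible with simple normal crossings), and the remaining case $\mathfrak{aff}(\mathbb C)\times\mathfrak{aff}(\mathbb C)$ is handled via the Dufour--Molinier linearization theorem, which yields a normal form in which $\mathcal F_Y$ is visibly regular at $p$.

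Your argument instead exploits the logarithmic Hamiltonian $v=\Pi^\sharp(dy/y)$, a Poisson vector field tangent to $Y$ whose restriction therefore preserves $\sing(\mathcal F_Y)$; the constraints $(i_{dz}A)(p)=0$, $(i_{dy}B)(p)=0$, $(A\wedge B)(p)\neq 0$ force $v_{|Y}(p)\neq 0$, so the flow produces a curve in $\sing(\mathcal F_Y)$ through $p$. This is more elementary --- no Lie-algebra classification, no normal-form theorem --- and is very much in the spirit of the paper's own proof of Theorem~\ref{T:closed}, where the same vector fields $\Pi^\sharp(df_i/f_i)$ appear as infinitesimal symmetries. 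The trade-off is that you conclude ``not isolated'' rather than the paper's stronger ``regular at $p$'', but this is exactly what the lemma requires. One small point worth making explicit in a final write-up: the fact that $\Pi^\sharp(dy/y)$ is \emph{holomorphic} near $Y$ follows because $Y$ Poisson means $\{y,g\}\in(y)$ for all $g$, i.e.\ every coefficient of $\Pi^\sharp(dy)$ lies in $(y)$.
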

\begin{proof}
 If $p$ is a singular point of $\mathcal{F}_Y$, then $\Pi(p)=0$. Locally, this means that
 $\Pi \in \mathfrak{m}_p\otimes \wedge^2 TX$. In particular,
 $\Pi\wedge\Pi\in\mathfrak{m}^2_p\otimes\wedge^4 TX$. Since $\D(\Pi)$ is normal crossing, we can
 find local coordinates $(x_1,x_2,x_3,x_4)$ in a neighborhood of $p=0$, such that
 $\Pi\wedge\Pi=x_1x_2V$, where $V$ is a $4$-derivation.
 Write $\Pi=\Pi_1+\Pi_2+\ldots$ the Taylor series of $\Pi$. To prove the lemma, we just need to check
 that $\Pi_1\wedge\Pi_1=0$. Since $\Pi_1$ is a linear Poisson structure, it can be reinterpreted as
 a Lie algebra on $(\lm{C}^4)^*$ (see \cite{DufourZung}, Chapter 1).
 We have a complete classification of the Lie algebra structure in dimension 4 and
 a simple check of the table in \cite[Lemma 3]{BurdeSteinhoff} shows that $\Pi_1\wedge\Pi_1\neq0$ just in the cases
 $\mathfrak{aff}(\lm{C})\times\mathfrak{aff}(\lm{C})$, $\lieg_6$ and $\lieg_8(\alpha)$.

 The last two cases are excluded because for them we have $\Pi_1\wedge\Pi_1=x_4^2\dbyd{}{x_1}\wedge\ldots\wedge\dbyd{}{x_4}$ which is not coherent with the assumption that $\D(\Pi)$ is a simple normal crossing divisor.
 To exclude the first case,
 we use the theorem of Dufour and Molinier  \cite[{Theorem 4.4.12}]{DufourZung} which states that we can find coordinates $(y_1,\ldots,y_4)$,
 $Y=\{y_1=0\}$ such that
 $$\Pi=y_1\dbyd{}{y_1}\wedge\dbyd{}{y_2}+y_3\dbyd{}{y_3}\wedge\dbyd{}{y_4}.$$
{The  foliation induced by $\Pi$ on $Y$ is given by the kernel of the holomorphic $1$-form $\mathrm{d}y_2$ and, so,   is regular at $p$.}
 \end{proof}

\begin{lemma}\label{L:Q3}
Let $\omega$ be a logarithmic $1$-form with simple normal crossing polar divisor $D$  on the quadric $3$-fold $Q^3$.
If the degree of $D$ is at most three then $\omega$ admits an isolated singularity.
\end{lemma}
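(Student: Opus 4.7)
The plan is to use the top Chern class of $\Omega^1_{Q^3}(\log D)$ as a count of the isolated zeros of $\omega$ and to verify positivity of this number across the short list of configurations allowed for $D$.

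First, I would enumerate the configurations. The residue exact sequence
\[
0 \to \Omega^1_{Q^3} \to \Omega^1_{Q^3}(\log D) \to \bigoplus_i \mathcal{O}_{D_i} \to 0,
\]
combined with the vanishings $h^0(Q^3,\Omega^1_{Q^3})=0$ and $h^1(Q^3,\Omega^1_{Q^3})=1$ (which hold because $\Pic(Q^3)=\mathbb{Z}$), shows that the residues $(\lambda_i)$ of $\omega$ satisfy the single linear relation $\sum_i \lambda_i \deg(D_i)=0$. Since $D$ is the actual polar divisor of $\omega$, no $\lambda_i$ vanishes, so $D$ must have at least two irreducible components. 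Under the constraint $\sum \deg(D_i)\le 3$ with each $\deg(D_i)\ge 1$, this leaves exactly three degree-partitions: $(1,1)$, $(2,1)$, and $(1,1,1)$.

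Next, I would compute the top Chern class in each case using multiplicativity of the total Chern class along the residue sequence,
\[
c\bigl(\Omega^1_{Q^3}(\log D)\bigr) \;=\; c(\Omega^1_{Q^3}) \cdot \prod_i \frac{1}{1-d_i H},
\]
together with $c(\Omega^1_{Q^3}) = 1 - 3H + 4H^2 - 2H^3$ (extracted from the normal sequence $TQ^3 \to T\mathbb{P}^4|_{Q^3} \to \mathcal{O}(2H)|_{Q^3}$) and $\int_{Q^3} H^3 = 2$. A straightforward calculation gives $\int c_3\bigl(\Omega^1_{Q^3}(\log D)\bigr) = 2,\,8,\,4$ in the three respective cases, in particular strictly positive in all of them.

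Finally, I would combine this with two facts already developed in the paragraph preceding the statement: because all residues are nonzero, $\omega$ has no zero on $D$; and because $D$ is an ample simple normal crossing divisor, the Cukierman--Soares--Vainsencher result cited there forces every component of the singular locus of the foliation defined by $\omega$ that is disjoint from $D$ to be zero-dimensional. Together these imply that the zero scheme of $\omega$, viewed as a section of $\Omega^1_{Q^3}(\log D)$, is purely zero-dimensional, and its length coincides with $c_3\bigl(\Omega^1_{Q^3}(\log D)\bigr) > 0$. This exhibits an isolated singularity. The only real obstacle in this plan is the Chern class computation, a mechanical task once the Chern polynomial of $TQ^3$ and the identification of the connecting map with $(\lambda_i)\mapsto \sum_i \lambda_i\, c_1(\mathcal{O}(D_i))$ are in hand.
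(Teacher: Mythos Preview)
Your argument is correct, and the Chern numbers $2$, $8$, $4$ are right. Both proofs ultimately rest on the positivity of $c_3\bigl(\Omega^1_{Q^3}(\log D)\bigr)$, but the routes differ in how that positivity is established. For the base case $D=H_1+H_2$, the paper argues geometrically: the unique (up to scale) logarithmic form defines the pencil spanned by $H_1,H_2$, and since the dual variety of $Q^3$ is again a quadric, the pencil meets it in two points, giving two isolated singularities. The case $D=H_1+H_2+H_3$ is then handled by a semicontinuity argument (the old isolated zeros persist under a generic perturbation into the larger space), yielding only the inequality $c_3\ge 2$. The $(2,1)$ case is disposed of by remarking that a direct computation gives $c_3=8$. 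Your approach replaces the geometric input (dual variety, pencils, deformation) by a single uniform Chern-class computation via the residue sequence; this is cleaner and yields the exact values in all three cases at once, at the cost of foregoing the pleasant projective-geometric picture of where the isolated singularities actually sit.
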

\begin{proof}
First assume that $D$ has degree 2, i.e. $D$ is the union of two hyperplane sections $H_1$ and $H_2$ intersecting $Q^3$ transversely.
Note that $h^0(Q^3,\Omega^1_{Q^3}(\log H_1 +H_2))=1$, and the foliation induced by any section of
$\Omega^1_{Q^3}(\log H_1 +H_2)$ is the pencil of hyperplane sections generated by $H_1$ and $H_2$.
Since the dual variety of $Q^3$ is also a quadric, in the pencil generated by $H_1$ and $H_2$ there are two elements
which intersect $Q^3$ on a cone over a two-dimension smooth quadric $Q^2$. Therefore, any nonzero $\omega_0 \in H^0(Q^3,\Omega^1_{Q^3}(\log H_1 + H_2))$
has two isolated singularities (counted with multiplicities).

Suppose now that $D$ is the union of three hyperplane sections intersecting transversely, say $H_1, H_2,$ and $H_3$. Through the inclusion
$\Omega^1_{Q^3}(\log H_1 + H_2) \to \Omega^1_{Q^3}(\log H_1 + H_2 + H_3)$ we can interpret $\omega_0 \in H^0(Q^3,\Omega^1_{Q^3}(\log H_1 + H_2))$
as a section of $\Omega^1_{Q^3}(\log H_1 + H_2 + H_3)$, and as such it vanishes not only at  the zeros of the corresponding rational $1$-form
but also at $H_3$. Nevertheless, sufficiently small general perturbations of  $\omega_0$ inside $H^0(Q^3,\Omega^1_{Q^3}(\log H_1 + H_2+ H_3))$
will still have isolated singularities near the original isolated singularity of $\omega_0$. This suffices to show that
$c_3(\Omega^1_{Q^3}(\log H_1 + H_2 + H_3) ) \ge c_3(\Omega^1_{Q^3}(\log H_1 + H_2 ) ) \ge 2$.

The remaining case, $D$ is the union of a smooth hyperplane section and a smooth hypersurface of degree 2 intersecting transversely,
can be dealt with similarly. Alternatively, a straightforward computation shows that $c_3(\Omega^1_{Q^3}(\log D))=8$.
\end{proof}

\begin{remark}
The constraints on the degree of $D$ and on the dimension of hyperquadric $Q$ are not really necessary.
The {\it continuity} argument used above can be pushed to
prove that any logarithmic $1$-form on $Q^{n}$, $n\ge 3$, with simple normal crossing polar divisor has isolated singularities.
\end{remark}

\subsection{Proof of Theorem \ref{TI:A}} The result below together  with Propositions \ref{P:concluir} and \ref{P:inducao}
clearly imply Theorem \ref{TI:A}.

\begin{prop}
Let $X$ be a  {Fano} $4$-fold with cyclic Picard group. If there exists a generically symplectic Poisson
structure on $X$ with simple normal crossing degeneracy divisor then $X$ is $\mathbb P^4$ and $\D(\Pi)$ is the
union of five hyperplanes in general position.
\end{prop}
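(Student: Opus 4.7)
The plan is to use Proposition \ref{Prop:i(X)>3} to reduce to $X\in\{\mathbb P^4, Q^4\}$ and then eliminate all but the desired configuration by a case analysis on the irreducible decomposition of $\D(\Pi)$. Since $\D(\Pi)\in |{-}K_X|$ has degree $5$ on $\mathbb P^4$ and $4$ on $Q^4$, and by Corollary \ref{C:k>=3} must have at least three irreducible components, the partitions to consider are $(1,1,1,1)$ and $(1,1,2)$ on $Q^4$, and $(1,1,1,1,1)$, $(1,1,1,2)$, $(1,2,2)$, and $(1,1,3)$ on $\mathbb P^4$. The goal is to rule out every partition except $(1,1,1,1,1)$ on $\mathbb P^4$.

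For the four configurations containing a smooth quadric threefold component (both on $Q^4$, where any hyperplane section component is a smooth $Q^3$, and the partitions $(1,1,1,2)$ and $(1,2,2)$ on $\mathbb P^4$, where one of the quadric components is $Q^3$), I would choose $Y$ to be such a component and apply the following unified argument. A degree count shows that $(\D(\Pi)-Y)\cap Y$ has at most three irreducible components of total degree at most three on $Y\cong Q^3$, and the same bounds hold for the polar divisor $E=(\D(\Pi)-Y)\cap Y-\D(\Pi_{|Y})$ of the defining logarithmic $1$-form $\omega$ of $\mathcal F_Y$; Proposition \ref{Prop:Singularset} ensures $E\neq 0$. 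If $E$ is irreducible, the residue theorem on $Q^3$ combined with $\Pic Y=\mathbb Z$ forces the unique residue of $\omega$ to vanish, so $\omega$ is holomorphic; since $H^0(Q^3,\Omega^1_{Q^3})=0$, this gives $\omega=0$, contradicting that $\omega$ defines $\mathcal F_Y$. If $E$ has at least two irreducible components, Lemma \ref{L:Q3} yields an isolated singularity $p$ of $\mathcal F_Y$, and Lemma \ref{Lem:Technicallemma} places $p$ at a triple intersection $Y\cap Z_1\cap Z_2$ of $\D(\Pi)$. Since isolated singularities of $\mathcal F_Y$ lie outside $E$ and $p$ belongs to both $Y\cap Z_1$ and $Y\cap Z_2$, neither of these divisors can be a component of $E$, so both must be components of $\D(\Pi_{|Y})$; hence $\D(\Pi_{|Y})$ has at least two components. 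Combined with $|E|\geq 2$, this forces $(\D(\Pi)-Y)\cap Y$ to have at least four irreducible components, contradicting the bound of three.

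The remaining case $(1,1,3)$ on $\mathbb P^4$, where $\D(\Pi)=H_1+H_2+W$ with $W$ a smooth cubic threefold, has no quadric threefold component and must be handled separately. The plan is to apply Lemma \ref{Lemma:vectorfield} to the linearly equivalent Poisson hypersurfaces $H_1$ and $H_2$, obtaining a nonzero global vector field $v\in H^0(\mathbb P^4,T\mathbb P^4)$ that is Hamiltonian off $H_1\cup H_2$. Since $W$ is a Poisson subvariety, $v$ is tangent to $W$, so $v|_W\in H^0(W,TW)$. By the classical theorem of Matsumura--Monsky, smooth hypersurfaces of degree at least three in projective space have finite automorphism group, so $H^0(W,TW)=0$ and $v|_W\equiv 0$. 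Then $v$ vanishes identically along $W$, hence lies in $H^0(\mathbb P^4,T\mathbb P^4\otimes\mathcal O(-3))$, which is zero by the Euler sequence twisted by $\mathcal O(-3)$; this contradicts $v\neq 0$.

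Once all non-desired configurations are eliminated, only $X=\mathbb P^4$ with $\D(\Pi)$ a union of five hyperplanes survives, and the simple normal crossing hypothesis immediately forces these hyperplanes to be in general position. The hardest step is the cubic threefold case: no quadric threefold is available to invoke Lemma \ref{L:Q3}, and one must instead exploit the rigidity of smooth cubic threefolds together with the vanishing of $H^0(\mathbb P^4,T\mathbb P^4\otimes\mathcal O(-3))$.
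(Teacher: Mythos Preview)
Your proof is correct. For the configurations containing a smooth quadric threefold you follow essentially the paper's line: both arguments combine Lemma~\ref{L:Q3} (existence of an isolated singularity on $Q^3$) with Lemma~\ref{Lem:Technicallemma} (such a singularity sits at a triple intersection) and the fact, recorded just before Lemma~\ref{Lem:Technicallemma}, that isolated singularities of $\mathcal F_Y$ avoid the polar divisor $E$. Your counting ``$|\D(\Pi_{|Y})|\ge 2$ and $|E|\ge 2$ force $\ge 4$ components, but only $\le 3$ exist'' is the contrapositive of the paper's ``$p$ lies in two components of $(\D(\Pi)-Y)\cap Y$, and since at most one of the three can lie outside $E$, $p$ must meet $E$''; these are the same argument in different clothing. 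Note that your sub-case ``$E$ irreducible'' is in fact vacuous: Proposition~\ref{P:k>=3} already forces the polar divisor to have at least two components.

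Where you genuinely diverge from the paper is the cubic case $\D(\Pi)=H_1+H_2+W$ on $\mathbb P^4$. The paper works on $W$: it identifies $\mathcal F_W$ with the pencil of hyperplane sections generated by $H_1\cap W$ and $H_2\cap W$, then uses that the dual of a smooth cubic is a hypersurface to produce singular members whose nodes are isolated singularities of $\mathcal F_W$ lying off $(H_1\cup H_2)\cap W$, contradicting Lemma~\ref{Lem:Technicallemma}. You instead work on $\mathbb P^4$: applying Lemma~\ref{Lemma:vectorfield} to the linearly equivalent Poisson hypersurfaces $H_1,H_2$ produces a nonzero $v\in H^0(\mathbb P^4,T\mathbb P^4)$ tangent to $W$, and then Matsumura--Monsky rigidity ($H^0(W,TW)=0$) forces $v$ to vanish along $W$, hence $v\in H^0(\mathbb P^4,T\mathbb P^4(-3))=0$. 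This is a clean alternative that sidesteps the geometry of the pencil; its cost is importing the Matsumura--Monsky theorem, which is not otherwise used in the paper, whereas the paper's route stays entirely within the machinery already developed (logarithmic forms, Lemma~\ref{Lem:Technicallemma}).
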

\begin{proof}
Proposition \ref{Prop:i(X)>3} implies $X$ is a four dimension hyperquadric $Q^4$ or $\mathbb P^4$. Aiming at
a contradiction let us assume $X = Q^4$. Since the index of $Q$ is $4$, we have that $\D(\Pi)$ has three
or four irreducible components with degrees summing up to $4$. Let $Y$ be an irreducible component of degree 1.
Thus $Y = Q^3$ is a three-dimensional hyperquadric. The symplectic foliation on $Q^3$ is defined
by logarithmic $1$-form with at least two hypersurfaces in its polar set. According to Lemmas  \ref{Lem:Technicallemma} and \ref{L:Q3}, $\mathcal F_Y$ has isolated singularities which
lies at the intersection of at least 3 distinct irreducible components of $\D(\Pi)$. Thus, it must lie also on the polar divisor
of $\omega$. But, as already pointed out $\mathcal F_Y$ does not have isolated singularities at a neighborhood of the polar divisor of $\omega$. This contradiction
proves that $X= \mathbb P^4$.

Assume now that $X=\mathbb P^4$. If the conclusion does not hold then $\D(\Pi)$ contains a hyperquadric $Q$ or a cubic hypersurface $C$. When $\D(\Pi)$
contains a hyperquadric then the argument of the previous paragraph leads to a contradiction.  If instead $\D(\Pi)$ contains a cubic hypersurface $C$ but does no contain
a hyperquadric 
then $\D(\Pi) = C + H_1 + H_2$  for suitable hyperplanes $H_1, H_2$. The induced foliation
 $\mathcal F_C$ on $C$ is defined by a logarithmic $1$-form with poles on $H_1\cap C$ and $H_2\cap C$ according to Proposition \ref{P:log}. Therefore 
 the leaves of $\mathcal F_C$ are elements of a pencil of hyperplane sections of $C$. The singular members of this pencil (which exist because the dual
of a smooth cubic is an hypersurface)  would give isolated singularities
for $\mathcal F_C$ disjoint of $\D(\Pi)$. In both cases we arrive at contradictions which imply that $D(\Pi)$ is a union of five hyperplanes as claimed.
\end{proof}

\section{Stability of diagonal Poisson structures}

\subsection{Curl operator}\label{S:curl}
Let us fix a neighborhood $U$ of $0 \in \mathbb C^{n}$ and a nowhere vanishing $n$-form $\Omega$
on $U$, e.g. $\Omega=d x_1\wedge \ldots\wedge d x_n$.
For every $p=0,1,\ldots,n$, the map
$$\Omega:\bigwedge^p T {U}\rar\Omega^{n-p}_{U}$$
defined by $\Omega(A)=i_A(\Omega)$, is an $\mathcal{O}_{U}$-linear isomorphism from the space $\wedge^p TU$ to $\Omega^{n-p}_{\lm{C}^{n}}$.
The inverse map will be denoted
as $\Omega^{-1}:\Omega^{n-p}_{U}\rar\bigwedge^p T {U}$.

The linear operator defined by the composition $\Omega^{-1}\circ d \circ \Omega$ is called the curl operator and it is denoted by $D_\Omega$.

If $\Pi$ is a Poisson structure on $U$  then the vector field $D_\Omega \Pi$ is called the curl vector field (with respect to $\Omega$) of $\Pi$.
As suggested by the notation, the curl vector field does depend on the choice of $\Omega$. But the set of points where both $\Pi$ and $D_{\Omega} \Pi$
vanish, is independent of the choice of a nowhere vanishing $n$-form. We will call this set, the set of degenerate singular points of $\Pi$.

According to  \cite[{Lemma 2.6.9}]{DufourZung}  the identity
\[
 [D_\Omega\Pi,\Pi]=0
\]
holds true, i.e. $D_\Omega \Pi$ is a Poisson vector field for $\Pi$.

\begin{lemma}\label{L:sss}
Assume $n = 2m \ge 4$. Let $\Pi= \sum_{i<j} \lambda_{ij} x_i x_j \frac{\partial}{\partial x_i} \wedge \frac{\partial}{\partial x_j}$ be a
general diagonal Poisson structure on $U$, i.e. the complex numbers $\lambda_{ij}$ are general.
Let $T$ be an irreducible complex variety containing a point $t_0$ and let
$\Pi_{t}$, $t \in T$, be a holomorphic family of Poisson structures
on $U$ such that $\Pi_{t_0} = \Pi$. Then, after   restricting $U$ and $T$,
 there exists $\gamma : T \to U$ such that $\gamma(t)$ is the unique degenerate singular point of $\Pi_t$ in $U$. 
 Moreover,  the vanishing order of $(\Pi_t)^m$ at $\gamma(t)$ is at least $n=2m$.
\end{lemma}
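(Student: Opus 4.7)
The plan is to locate $\gamma(t)$ as the unique zero of the curl vector field $D_\Omega \Pi_t$ in $U$, and then use the Poisson vector field identity $[D_\Omega \Pi_t, \Pi_t] = 0$ to force the Taylor expansion of $\Pi_t$ at $\gamma(t)$ to vanish to order $2$. First, one computes the curl of $\Pi$ at $t_0$:
\[
D_\Omega \Pi = \sum_{i=1}^{n} \mu_i\, x_i\, \frac{\partial}{\partial x_i}, \qquad \mu_i = \sum_{j \neq i} \tilde\lambda_{ij},
\]
where $\tilde\lambda$ denotes the antisymmetric extension of $\lambda$. For general choice of $\lambda_{ij}$ the scalars $\mu_i$ are nonzero and satisfy the non-resonance conditions $\mu_i + \mu_j \neq 0$ for $i \neq j$ and $\mu_m \neq \mu_i + \mu_j$ for all $m$ and all $i \neq j$. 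The linearization of $D_\Omega \Pi$ at the origin is thus invertible, so the holomorphic implicit function theorem applied to the equation $D_\Omega \Pi_t(x)=0$ produces, after shrinking $T$ and $U$, a unique holomorphic section $\gamma : T \to U$ with $D_\Omega \Pi_t(\gamma(t)) = 0$, and $\gamma(t)$ is the only zero of $D_\Omega \Pi_t$ in $U$. Since any degenerate singular point of $\Pi_t$ must be a zero of the curl, this immediately gives uniqueness once we know that $\gamma(t)$ is itself a degenerate singular point.

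The heart of the argument is then to use $[D_\Omega \Pi_t, \Pi_t] = 0$ to show that $\Pi_t$ vanishes to order at least $2$ at $\gamma(t)$. Let $L_t$ denote the linear part of $D_\Omega \Pi_t$ at $\gamma(t)$; since $L_{t_0}$ is diagonal with distinct nonzero eigenvalues $\mu_i$, the operator $L_t$ is semisimple for $t$ near $t_0$ and its eigenvalues stay close to the $\mu_i$. Write the Taylor expansion $\Pi_t = \Pi^{(0)}_t + \Pi^{(1)}_t + \cdots$ at $\gamma(t)$ with $\Pi^{(k)}_t$ homogeneous of degree $k$ in $y = x - \gamma(t)$. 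Collecting the terms of degree $0$ and $1$ in the Poisson vector field identity yields
\[
[L_t, \Pi^{(0)}_t] = 0 \quad \text{and} \quad [L_t, \Pi^{(1)}_t] = -[V^{(2)}_t, \Pi^{(0)}_t],
\]
where $V^{(2)}_t$ denotes the quadratic part of $D_\Omega \Pi_t$. The operator $\mathrm{ad}(L_t)$ acts on constant bivectors with eigenvalues close to $-(\mu_i + \mu_j)$ and on bivectors with linear coefficients with eigenvalues close to $\mu_m - \mu_i - \mu_j$; by the non-resonance conditions these eigenvalues are nonzero, so both actions are invertible. Hence $\Pi^{(0)}_t = 0$, and then $\Pi^{(1)}_t = 0$.

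This proves $\Pi_t$ vanishes to order $\geq 2$ at $\gamma(t)$, whence $\Pi_t^m$ vanishes to order $\geq 2m = n$ there; in particular $\Pi_t(\gamma(t)) = 0$, so $\gamma(t)$ is indeed the unique degenerate singular point. The main obstacle is the transition from a zero of the curl to a zero of $\Pi_t$: the implicit function theorem only produces $\gamma(t)$ as a zero of $D_\Omega \Pi_t$ and gives no direct control over $\Pi_t(\gamma(t))$. Overcoming this is precisely what the resonance analysis in the previous paragraph achieves, by exploiting that $D_\Omega \Pi_t$ is not an arbitrary vector field but a Poisson vector field whose linearization at $\gamma(t)$ is, by generality of $\lambda$, sufficiently non-resonant.
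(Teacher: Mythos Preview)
Your proof is correct and follows the same overall architecture as the paper: compute the curl, track its unique simple zero $\gamma(t)$ via the implicit function theorem, and then use the identity $[D_\Omega\Pi_t,\Pi_t]=0$ to force the $0$-jet and $1$-jet of $\Pi_t$ at $\gamma(t)$ to vanish. The difference lies in how that last step is executed. The paper invokes Poincar\'e--Dulac/Brjuno normal form theory (citing Martinet) to put $D_\Omega\Pi_t$ into the form $\sum_i A_{i,t}(x_1\cdots x_n)\,x_i\,\partial/\partial x_i$; this requires the strong non-resonance hypothesis that the eigenvalues $\mu_{i,t}$ admit no integer relations beyond $\sum_i\mu_i=0$, a condition which is not open and which the paper only secures ``for $t$ sufficiently general'' (implicitly closing up by semicontinuity of vanishing order). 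You instead work directly with the Taylor expansion at $\gamma(t)$ and observe that the degree-$0$ and degree-$1$ pieces of $[D_\Omega\Pi_t,\Pi_t]=0$ involve only the linear part $L_t$ of the curl; since $\mathrm{ad}(L_t)$ acts on constant and linear bivectors with eigenvalues $-(\mu_i+\mu_j)$ and $\mu_k-\mu_i-\mu_j$, only finitely many open non-resonance conditions are needed, and these persist for all $t$ near $t_0$. Your route is therefore more elementary---it avoids formal normal form theory entirely---and slightly sharper in that it works uniformly in $t$ without a density-plus-semicontinuity patch. One small point you leave implicit: you should also record that the $\mu_i$ are pairwise distinct (so that $L_t$ stays diagonalizable), but this is of course another generic open condition on the hyperplane $\sum_i\mu_i=0$.
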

\begin{proof}
Let $\Omega= dx_1 \wedge \cdots \wedge dx_n$ be the standard volume form on $\mathbb C^n$.
The curl of $\Pi$ with respect to $\Omega$ is
\[
D_{\Omega} \Pi = \sum_{i} \mu_i  x_i \frac{\partial}{\partial x_i}, \quad \text{ where } \quad \mu_i = \sum_{i<j} \lambda_{ij} - \sum_{i>j}   \lambda_{ji} \, .
\]
For a general choice of $\lambda_{ij}$ the origin is the unique singularity of $D_{\Omega} \Pi$. This singularity is simple in the sense that
the ideal generated by the coefficients of $D_{\Omega} \Pi$ coincides with the maximal ideal of  $\mathcal O_{U,0}$.  Therefore
 for sufficiently small $t$, $D_\Omega\Pi_t$ has a unique simple singularity $\gamma(t)$ close to the origin. Implicit function theorem
 implies that $\gamma : T \to U$ is a holomorphic function.

Note that $\sum_{i} \mu_i=0$ and  that the linear map
\[
\sum_{ij} \lambda_{ij} x_i x_j \frac{\partial}{\partial x_i} \wedge\frac{\partial}{\partial x_j }  \longmapsto \sum_i x_i \mu_i
\]
has rank $n-1$. In particular, if the complex numbers $\lambda_{ij}$  are sufficiently general then
$
\sum_{i} c_i \mu_i =0
$
with $c_i \in \mathbb Z$ implies that $c_1= \cdots = c_n$. For $t$ sufficiently general, the eigenvalues of the linear
part of $D_{\Omega} \Pi_t$ at $\gamma(t)$ will have the same property.
According to \cite{MartinetBourbaki}, we have a formal change of coordinates  centered at $\gamma(t)$,
which transforms $D_{\Omega} \Pi_t$ into a  vector field
\[
   v_t= D_{\Omega} \Pi_t = \sum_{i} A_{i,t}(x_1 \cdots x_n)  x_i \frac{\partial}{\partial x_i} \,  \, ,
\]
where $A_{i,t}$ are germs holomorphic functions in $(\mathbb C,0)$ satisfying $A_{i,t}(0)= \mu_{i,t} \neq 0$.

Since $v_t$ is Poisson vector field for $\Pi_t$ we have that $[v_t,\Pi_t]=0$. If we write
$\Pi_t = \sum \pi_{ij,t} \frac{\partial}{\partial x_i} \wedge \frac{\partial}{\partial x_j}$ then  we
obtain
\[
   \sum_{i,j} \pi_{ij,t}(0) ( \mu_{i,t} + \mu_{j,t} ) {\dbyd{}{x_i}\wedge\dbyd{}{x_j}} = 0 \, .
\]
Therefore for every $i,j$ the identity  $\pi_{ij,t}(0) = 0 $ holds true since $\mu_{i,t}$ and $\mu_{j,t}$
are $\mathbb Z$-linearly independent. Thus $\Pi_t$ vanishes at zero, and consequently  $\gamma(t)$ is a degenerate singular point for $\Pi_t$. 

Looking at the
linear part of $\Pi$ at zero we obtain that
\[ {
 (\mu_{i,t} + \mu_{j,t} - \mu_{k,t})\dbyd{\pi_{ij,t}}{x_k}(0)=0 \text{ for every $i$, $j$ and $k$.}}
 \]
Again by the $\mathbb Z$-linear independence of $\mu_{i,t},  \mu_{j,t}$ and $\mu_{k,t}$, we deduce that the linear part of $\Pi_t$
also vanishes at zero. Therefore $(\Pi_{t})^m$ vanishes at zero with order greater than or equal to  $2m$.
\end{proof}

\begin{lemma}\label{L:hyper}
 Let $H$ be a reduced hypersurface in $\mathbb P^k$ of degree $k+1$. Suppose that $H$ has $k+1$ singular points  in general position.
 If the algebraic multiplicity of each of these $k+1$ points is $k$ then  $H$ is the union of $k+1$ hyperplanes in general position.
\end{lemma}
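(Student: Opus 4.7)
The plan is to normalize the $k+1$ singular points by a projective change of coordinates and then read off the constraints that the high multiplicity imposes on the monomials of a defining equation.

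First, since the $k+1$ points are in general position in $\mathbb P^k$, they form a projective frame: no $k$ of them lie on a hyperplane. Consequently there is a projective linear transformation carrying them to the coordinate points $p_0 = [1:0:\cdots:0], p_1 = [0:1:0:\cdots:0], \ldots, p_k = [0:\cdots:0:1]$. After applying this transformation we may assume $H = \{F=0\}$, where $F \in \mathbb C[x_0,\ldots,x_k]$ is homogeneous of degree $k+1$, and each $p_i$ is a point of multiplicity at least $k$ of $H$.

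Next, I would translate the multiplicity condition at $p_i$ into a monomial condition on $F$. Write $F = \sum_{|\alpha|=k+1} c_\alpha x^\alpha$. To measure the multiplicity at $p_i$ I would restrict to the affine chart $x_i = 1$, obtaining the polynomial
\[
f_i(x_0,\ldots,\hat x_i,\ldots,x_k) = \sum_{|\alpha|=k+1} c_\alpha \prod_{j \ne i} x_j^{\alpha_j},
\]
whose vanishing order at the origin equals $\mathrm{mult}_{p_i} H$. A monomial in $f_i$ has affine degree $\sum_{j\ne i}\alpha_j = (k+1) - \alpha_i$, so the hypothesis $\mathrm{mult}_{p_i} H \ge k$ forces $c_\alpha = 0$ whenever $(k+1) - \alpha_i < k$, i.e.\ whenever $\alpha_i \ge 2$. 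Running this over all $i = 0,\ldots,k$, I conclude that every monomial with $c_\alpha \ne 0$ satisfies $\alpha_i \in \{0,1\}$ for every coordinate $i$.

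Finally, since $\sum_i \alpha_i = k+1$ and each $\alpha_i \in \{0,1\}$ with $k+1$ coordinates available, the only possibility is $\alpha = (1,1,\ldots,1)$. Thus $F = c\,x_0 x_1 \cdots x_k$ for some nonzero constant $c$, and $H$ is precisely the union of the $k+1$ coordinate hyperplanes, which are in general position by construction. No step here looks delicate; the mildly subtle point is the normalization in the first paragraph, where one must invoke the general position hypothesis to ensure the $k+1$ points can be placed simultaneously at the standard coordinate vertices, and the reducedness of $H$ is used only implicitly in the conclusion (the factor $c\,x_0\cdots x_k$ is automatically reduced).
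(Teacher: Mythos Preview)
Your proof is correct and follows essentially the same route as the paper: normalize the $k+1$ points to the coordinate vertices, deduce that every monomial in the defining polynomial is square-free, and conclude that $F$ is a scalar multiple of $x_0\cdots x_k$. The only cosmetic difference is that the paper phrases the multiplicity condition as the identical vanishing of $\partial^2 f/\partial x_i^2$ rather than via affine degree in the chart $x_i=1$, but these are equivalent formulations of the same constraint $\alpha_i\le 1$.
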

\begin{proof}
Let $p_0, \ldots, p_k$ be the $k+1$ points of $H$ with algebraic multiplicity $k$. We can assume that
$p_0=[1:0: \ldots :0], p_1 = [0:1: \ldots:0], \ldots, p_k=[0:\ldots:0:1]$. Let $f \in \mathbb C[x_0, \ldots, x_n]$ be a homogenous
polynomial of degree $k+1$ cutting out $H$. Since $H$ has algebraic multiplicity $k$ at $p_i$, it follows that the polynomials
$\frac{\partial^2 f}{\partial x_i^2}$
vanish identically for every $i\in \{ 0, \ldots, k\}$. In other words, every monomial contributing to the Taylor expansion of $f$ at $0 \in \mathbb C^{k+1}$ is square-free. But there is
only one square-free monomial of degree $k+1$, $x_0 \cdots x_{k+1}$. The lemma follows.
\end{proof}

\subsection{Proof of Theorem \ref{TI:B}} Let us recall the statement of Theorem \ref{TI:B}.

\begin{thm}\label{thm:B}
If we take sufficiently small deformations of
a generic diagonal Poisson structure in $\lm{P}^{n}$  then
the resulting Poisson structures are still diagonal Poisson structures.
\end{thm}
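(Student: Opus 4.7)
The plan is to treat the even and odd dimensions separately, as suggested by the introduction. For $n = 2m$ with $m \geq 2$, I would combine Lemmas \ref{L:sss}, \ref{L:hyper} and Proposition \ref{P:concluir}. Fix a generic diagonal Poisson structure $\Pi$ on $\mathbb{P}^n$; by definition its degeneracy divisor is the coordinate simplex $\D(\Pi) = H_0 + \cdots + H_n$. Each coordinate vertex $e_i$, the unique point where every $H_j$ with $j\neq i$ meets, is a degenerate singular point of $\Pi$, and a direct computation in affine coordinates centered at $e_i$ shows that $\Pi$ has exactly the diagonal form assumed by Lemma \ref{L:sss}.

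For a holomorphic family $\Pi_t$ of Poisson structures with $\Pi_{t_0} = \Pi$, I would apply Lemma \ref{L:sss} in a neighborhood of each $e_i$: for $t$ close to $t_0$ there is a unique degenerate singular point $p_i(t)$ of $\Pi_t$ near $e_i$, and $(\Pi_t)^m$ vanishes to order at least $n$ at $p_i(t)$. Now $(\Pi_t)^m$ is a section of $\bigwedge^n T\mathbb{P}^n \simeq \mathcal{O}_{\mathbb{P}^n}(n+1)$, so its zero locus $\D(\Pi_t)$ is a hypersurface of degree $n+1$ containing the $n+1$ points $p_0(t), \ldots, p_n(t)$ each with algebraic multiplicity at least $n$. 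For $t$ sufficiently close to $t_0$ these points remain in general position by continuity, so Lemma \ref{L:hyper} with $k = n$ forces $\D(\Pi_t)$ to be the union of $n+1$ hyperplanes in general position, and Proposition \ref{P:concluir} concludes that $\Pi_t$ is diagonal.

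For $n = 2m+1 \geq 3$, a generic diagonal Poisson structure $\Pi$ has rank $n-1$ and its symplectic foliation $\mathcal{F}$ is the logarithmic foliation with polar divisor the coordinate simplex and with generic residues. The stability of codimension 1 logarithmic foliations on $\mathbb{P}^n$ proved in \cite{Omegar} guarantees that sufficiently small deformations of $\mathcal{F}$ remain logarithmic on a union of $n+1$ hyperplanes in general position. Since the map sending a Poisson structure of rank $n-1$ to its symplectic foliation is holomorphic, the foliation $\mathcal{F}_t$ induced by $\Pi_t$ is of this type for $t$ close to $t_0$. I would then adapt the Hamiltonian invariance argument at the heart of Proposition \ref{P:concluir}: each of the $n+1$ hyperplanes being a Poisson subvariety of $\Pi_t$, invariance under the Hamiltonian flows of the coordinate functions constrains the bivector coefficients, in homogeneous coordinates adapted to these hyperplanes, to take the diagonal form.

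The main obstacle in the even case is essentially packaged inside Lemma \ref{L:sss}; what remains are the continuity arguments guaranteeing general position of the deformed vertices $p_i(t)$ and the multiplicity count, both routine. The real difficulty lies in the odd case, where one must pass from the stability statement for foliations to a stability statement for the Poisson structure itself, that is, one must verify that the fiber of the map $\Pi \mapsto \mathcal{F}_\Pi$ over a logarithmic foliation on a simplex is contained in the diagonal locus. Making this reconstruction precise is the most delicate step and is where the odd-dimensional argument genuinely departs from its even-dimensional counterpart.
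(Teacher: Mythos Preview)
Your even-dimensional argument is exactly the paper's: Lemma~\ref{L:sss} produces the $n+1$ degenerate singular points with the required vanishing order, Lemma~\ref{L:hyper} forces $\D(\Pi_t)$ to be the coordinate simplex, and Proposition~\ref{P:concluir} finishes.

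In the odd case you and the paper both start from the stability of codimension~1 logarithmic foliations \cite{Omegar}, but you diverge on the reconstruction step---the passage from ``$\mathcal{F}_{\Pi_t}$ is logarithmic on a simplex'' to ``$\Pi_t$ is diagonal.'' Your route is to observe that the polar hyperplanes, being invariant by the foliation, are Poisson subvarieties of $\Pi_t$, and then rerun the Hamiltonian-invariance computation of Proposition~\ref{P:concluir} (whose proof works verbatim in odd dimension once one has the lifting to a homogeneous quadratic bivector on $\mathbb{C}^{n+1}$). This is correct, and what you flag as ``the most delicate step'' is in fact not hard once you notice that tangency of Hamiltonian vector fields to the polar divisor is automatic for a logarithmic symplectic foliation. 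The paper instead bypasses this entirely: it notes that for a logarithmic foliation $\mathcal{F}_\omega$ on the simplex the tangent sheaf $T\mathcal{F}_\omega$ is \emph{trivial}, with $H^0(\mathbb{P}^{2k+1},T\mathcal{F}_\omega)$ spanned by the $2k$ diagonal vector fields $x_i\partial_{x_i}$ (modulo the Euler field). Since $\Pi_t$ is tangent to $\mathcal{F}_{\Pi_t}$, it is a global section of $\bigwedge^2 T\mathcal{F}_{\Pi_t}\cong\mathcal{O}^{\binom{2k}{2}}$, hence lies in $\bigwedge^2 H^0(T\mathcal{F}_{\Pi_t})$ and is automatically diagonal. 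The paper's approach is shorter and more conceptual---it identifies the entire fiber of $\Pi\mapsto\mathcal{F}_\Pi$ at once as a linear space of diagonal bivectors---whereas yours is a direct computation that reuses machinery already built for the even case.
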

\begin{proof}
Assume first that $n=2k+1$ is odd. If $\Pi$ is a generic Poisson structure then it has rank $2k$. The symplectic foliation
is nothing but the logarithmic foliation defined by
\[
\omega= \left(\prod_{i=0}^{2k+1 }x_i\right) \left( \sum_{i=0}^{2k+1} \lambda_i \frac{dx_i}{x_i} \right) \in H^0(\mathbb P^{2k+1}, \Omega^1_{\mathbb P^{2k+1}}(2k+2)) \,
\]
where $\lambda_i \in \mathbb C$ satisfy $\sum_{i=0}^{2k+1} \lambda_i =0$.   Moreover, any choice of complex numbers $\lambda_i$ summing up to zero defines
a codimension 1 logarithmic foliation $\mathcal F_{\omega}$ which is the symplectic foliation of a diagonal Poisson structure. To prove this note that the tangent sheaf of the foliation
is trivial and its space of global sections is vector space of dimension $2k$ of commuting vector fields tangent to the hypersurface $\{ x_0 \cdots x_{2k+1}=0\}$. Any element
of $\bigwedge^2 H^0(\mathbb P^{2k+1}, T\mathcal F_{\omega})$ having rank $2k$ defines the sought Poisson structure. The stability of a general diagonal Poisson structure
on $\mathbb P^{2k+1}$ follows from the corresponding result for the stability of codimension 1 logarithmic foliations with poles on $2k+2$ hyperplanes, see the main result of \cite{Omegar} or \cite[Example 6.2]{CukiermanJorge}.

Assume now that $n=2k$ is even. If $\Pi$ is a generic diagonal Poisson structure in $\mathbb P^{2k}$ then it is generically symplectic and $\Pi$ has $2k+1$ degenerate singular points.
Lemma \ref{L:sss} implies that any small deformation $\Pi_{\varepsilon}$ of $\Pi$ will still have  $2k+1$ degenerate singular points  and $\Pi_{\varepsilon}^k$ has vanishing
order $2k$ at each of these points.
 Consequently, the
degeneracy divisor $\D(\Pi_{\varepsilon})$ has $2k+1$ points of multiplicity $2k$ and Lemma \ref{L:hyper}  implies that it must be the union of $2k+1$ hyperplanes in general position.
Theorem \ref{TI:B}  follows from Proposition \ref{P:concluir}.\end{proof}

\bibliographystyle{amsplain}

\end{document}